\newtheorem{lemma}{Lemma}
\newtheorem{proposition}{Proposition}
\newtheorem{theorem}{Theorem}
\theoremstyle{definition}
\newtheorem{remark}{Remark}
\newcommand{\red}{\color{red}}
\newcommand{\blue}{\color{blue}}
\newcommand{\e}[1]{{\emph{#1}}}
\newcommand{\ee}[1]{{{\bf #1}}}
\newcommand{\hide}[1]{\ifbool{hidedetails}{}{{\blue #1}}}
\newenvironment{eqn}{\begin{IEEEeqnarray*}{rCl}}{\end{IEEEeqnarray*}}
\newcommand{\n}{{\IEEEyesnumber}}
\newenvironment{eqnn}{\begin{IEEEeqnarray*}{rCl}}{\n\end{IEEEeqnarray*}}
\newcommand{\w}[1]{{\widetilde{#1}}}
\newcommand{\lp}{\left(}
\newcommand{\rp}{\right)}
\newcommand{\lf}{\left\{}
\newcommand{\rf}{\right\}}
\newcommand{\lb}{\left[}
\newcommand{\rb}{\right]}
\newcommand{\spec}[1]{{{\rm Spec}\lp{#1}\rp}}
\newcommand{\duap}[1]{{\lp{#1}\rp^\vee}}
\newcommand{\suspense}[1]{{#1[1]}}						
\newcommand{\streaf}[2][\bullet]{{{\mathcal O}^{#1}_{#2}}}
\newcommand{\pt}{{{\rm p}}}								
\newcommand{\ainfty}{{{\rm A}_\infty}}							
\newcommand{\nopos}{{\mathbb Z_{\leq 0}}}
\newcommand{\pos}{{\mathbb Z_{\geq 1}}}
\newcommand{\homdi}{{\delta}}
\newcommand{\Vari}{{X}}
\newcommand{\Hori}[1][*]{{R_{#1}}}
\newcommand{\Horit}[2][*]{{R_{#1}^{\,\otimes^{#2}}}}
\newcommand{\Proga}[1][]{{\Gamma_{#1}}}
\newcommand{\Homin}{{t}}
\newcommand{\Homi}[1]{{\Homin_{#1}}}
\newcommand{\Homod}[1][*]{{M_{#1}}}
\newcommand{\Homodi}[2]{{M_{#1,#2}}}
\newcommand{\Hobeg}{{a}}
\newcommand{\Hoend}{{b}}
\newcommand{\Hoendi}[1]{{b(#1)}}
\newcommand{\Hosub}[1][*]{{N_{#1}}}
\newcommand{\Hosubi}[2]{{N_{#1,#2}}}
\newcommand{\Hogen}[1][*]{{\w{N}_{#1}}}
\newcommand{\Hilpo}[1][]{{h_{#1}}}
\newcommand{\Hilva}{{\lambda}}
\newcommand{\Lia}[2][]{{{\mathfrak g}_{#1}^{#2}}}
\newcommand{\Liat}[3]{{{\mathfrak g}_{#1,#2}^{#3}}}
\newcommand{\Blia}[2][]{{\w{\mathfrak g}_{#1}^{#2}}}
\newcommand{\Bliat}[3]{{\w{\mathfrak g}_{[#1,#2]}^{#3}}}
\newcommand{\Coin}{{k}}
\newcommand{\Doin}{{l}}
\newcommand{\Hhom}[1][0]{{{\rm Hom}_{#1}}}
\newcommand{\Homini}{{s}}
\newcommand{\Lindet}[2]{{{\mathfrak d}_{[#1,#2]}}}
\newcommand{\Coa}[2][]{{{A}_{#1}^{#2}}}
\newcommand{\Coat}[3]{{{A}_{[#1,#2]}^{#3}}}
\newcommand{\Cindet}[2]{{d_{[#1,#2]}}}
\newcommand{\Boa}[2][\bullet]{{{\mathbf A}^{#1}_{#2}}}
\newcommand{\Voa}[1]{{{\mathbb M}_{#1}}}
\newcommand{\Voat}[2]{{{\mathbb M}_{#1,#2}}}
\newcommand{\Woa}[1]{{{\mathbf M}_{#1}}}
\newcommand{\Marat}[3][]{{{\mathbb W}^{#1}_{[#2,#3]}}}
\newcommand{\Narat}[3][]{{\underline{\mathbb W}^{#1}_{[#2,#3]}}}
\newcommand{\Mara}[1]{{{\mathbb W}_{#1}}}
\newcommand{\Varat}[3][]{{{\mathbb U}^{#1}_{[\Hobeg,#2],#3}}}
\newcommand{\Baby}{{\mathbb U}}
\newcommand{\GoodBaby}{{\mathbb V}}
\newcommand{\GooBaby}{{\mathbb W}}
\newcommand{\Xarat}[3][]{{\underline{\mathbb V}^{#1}_{[#2,#3]}}}
\newcommand{\Warat}[3][]{{\underline{\mathbb U}^{#1}_{[\Hobeg,#2],#3}}}
\newcommand{\Vara}[2][\Hobeg]{{{\mathbf U}_{[#1,#2]}}}
\newcommand{\MC}{{\gamma}}
\newcommand{\Harat}[2]{{{\mathcal H}_{[#1,#2]}}}
\newcommand{\Hara}[1]{{{\mathcal H}_{#1}}}
\newcommand{\Surj}[2]{{\pi^{#1}_{#2}}}
\newcommand{\Sarat}[2]{{\w{\mathcal H}_{[#1,#2]}}}
\newcommand{\Sara}[1]{{\w{\mathcal H}_{#1}}}
\newcommand{\Groupi}[2]{{{\mathbf G}_{#1,#2}}}
\newcommand{\Group}[1]{{{\mathbf G}_{#1}}}
\newcommand{\Gras}[1]{{{\mathbf{Gr}}_{#1}}}
\newcommand{\Grasi}[2]{{{\mathbf{Gr}}_{[#1,#2]}}}
\newcommand{\Emb}[1][]{{{\rm Emb}_{#1}}}
\newcommand{\AlgQ}[2]{{#1\slash #2}}
\newcommand{\GeoQ}[2]{{#1\sslash #2}}
\newcommand{\Shig}[2][\bullet]{{{\mathcal O}^{#1}_{#2}}}
\newcommand{\Hhig}[2][\bullet]{{\w{\mathcal O}^{#1}_{#2}}}
\newcommand{\Bhig}[2][\bullet]{{\overline{\mathcal O}^{#1}_{#2}}}
\newcommand{\Cotap}[3][\bullet]{{\Omega^{#1}\lp #2\rp|_{#3}}}
\newcommand{\Hogy}[1][\bullet]{{H^{#1}}}
\newcommand{\Hosh}[1][\bullet]{{{\mathcal H}^{#1}}}
\newcommand{\Gm}{{\mathbb C^\times}}
\begin{document}

\title[Derived $Quot$-schemes as dg manifolds]{Shifted symplectic structures on derived Quot-stacks II -- Derived $Quot$-schemes as dg manifolds}
\maketitle

\author{Dennis Borisov, Ludmil Katzarkov, Artan Sheshmani}
{Dennis Borisov${}^{0}$ and Ludmil Katzarkov$^{3,4}$ and Artan Sheshmani${}^{1,2,3}$}
\address{${}^0$  Department of Mathematics and Statistics, University of Windsor, 401 Sunset Ave., Windsor Ontario, Canada}
\address{${}^1$  Beijing Institute of Mathematical Sciences and Applications, A6, Room 205 No. 544, Hefangkou
Village, Huaibei Town, Huairou District, Beijing 101408, China}
\address{${}^2$ Massachusetts Institute of Technology (MIT), IAiFi Institute, 182 Memorial Drive, Cambridge,
MA 02139, USA}
\address{${}^3$ National Research University Higher School of Economics, Russian Federation, Laboratory of Mirror Symmetry, NRU HSE, 6 Usacheva str.,Moscow, Russia, 119048}
\address{${}^4$ University of Miami, Department of Mathematics, 1365 Memorial Drive, Ungar 515, Coral Gables, FL 33146}

\date{\today}

\begin{abstract} It is proved that derived $Quot$-schemes, as defined by Ciocan-Fontanine and Kapranov, are represented by dg manifolds of finite type. This is the second part if a work aimed to analyze shifted symplectic structures on moduli spaces of coherent sheaves on Calabi--Yau manifolds. The first part related dg manifolds to derived schemes as defined by To\"en and Vezzosi.

\smallskip

\noindent{\bf MSC codes:} 14A20, 14N35, 14J35, 14F05, 55N22, 53D30

\noindent{\bf Keywords:} Moduli spaces of sheaves, $Quot$-schemes, derived schemes, differential graded schemes

\end{abstract}

\tableofcontents

\section{Introduction}

\subsection{The question}
Given a projective variety $\Vari$ and a coherent sheaf $\mathcal F$ on $\Vari$, the functor of quotients of $\mathcal F$ with a given Hilbert polynomial $\Hilpo_{\mathcal F}(\Hilva)-\Hilpo(\Hilva)$ is represented by a projective scheme -- the $Quot$-scheme. This is a classical result proved in \cite{Groth}. The question of a derived moduli functor was addressed in \cite{DerQuot}. To answer this question one has to start with a problem that allows a derived formulation. 

The idea of \cite{DerQuot} is as follows. Let $\Hori$ be the homogeneous ring corresponding to the projective variety $\Vari$, and let $\Homod$ be the graded $\Hori$-module corresponding to $\mathcal F$. Choosing $\Hobeg\geq 0$ large enough to ensure Castelnuovo--Mumford regularity, the classical problem becomes classifying all graded sub-modules $\Hosub[\geq\Hobeg]\subseteq\Homod[\geq\Hobeg]$ such that $\dim \Hosub[\Homin]=\Hilpo(\Homin)$ for each $\Homin\geq\Hobeg$. The derived moduli problem is to classify all such $\ainfty$-submodules. It was observed in \cite{DerQuot} that tangent complexes corresponding to this derived moduli problem have the expected cohomology given in terms of the $Ext$-functor.

\smallskip

A solution to this derived moduli problem was proposed in \cite{DerQuot}. To avoid infinite dimensionality one considers a finite stretch $\lb \Hobeg,\Homin\rb$ and classifies all graded $\Hori$-submodules $\Hosubi{\Hobeg}{\Homin}:=\underset{\Hobeg\leq\Homini\leq\Homin}\bigoplus\Hosub[\Homini]$ of $\Homodi{\Hobeg}{\Homin}:=\underset{\Hobeg\leq\Homini\leq\Homin}\bigoplus\Homod[\Homini]$ such that for any $\Hobeg\leq\Homini\leq\Homin$ $\dim \Hosub[\Homini]=\Hilpo(\Homini)$. The requirement on $\Hosubi{\Hobeg}{\Homin}$ to be an $\Hori$-submodule is expressed in terms of algebraic equations on the product of Grassmannians $\Grasi{\Hobeg}{\Homin}:=\underset{\Hobeg\leq\Homini\leq\Homin}\prod\Gras{}\lp\Hilpo(\Homini),\Homod[\Homini]\rp$ using the tautological bundles. 

Solving the derived moduli problem for the stretch $\lb \Hobeg,\Homin\rb$ results in a dg manifold $\Narat{\Hobeg}{\Homin}$ and an affine morphism $\Narat{\Hobeg}{\Homin}\longrightarrow\Grasi{\Hobeg}{\Homin}$. In \cite{DerQuot} it was shown that choosing $\Hoend\geq\Hobeg$ large enough, we have that $\forall \Homin\geq\Hoend$ the classical scheme in the dg manifold $\Narat{\Hobeg}{\Homin}$ is canonically isomorphic to the classical $Quot$-scheme. 

Moreover, it was shown that $\forall \Coin\geq 1$ there is $\Homi{\Coin}$ such that $\forall \Homin\geq\Homi{\Coin}$ the tangent complex of $\Narat{\Hobeg}{\Homin}$ has the correct cohomology in degrees $\leq\Coin$. This fact suggests that one should consider a homotopy limit of the sequence
	\begin{eqnn}\label{ProSeq}\ldots\longrightarrow\Narat{\Hobeg}{\Hoend+2}
		\longrightarrow\Narat{\Hobeg}{\Hoend+1}
		\longrightarrow\Narat{\Hobeg}{\Hoend}\end{eqnn}
as the derived $Quot$-scheme. Morphisms in (\ref{ProSeq}) are fibrations between dg manifolds, with the underlying morphisms in degree $0$ being projective. Therefore, one cannot compute a limit of (\ref{ProSeq}) within the category of dg manifolds. However, using the embedding of quasi-projective dg manifolds into derived stacks (e.g.\@ \cite{BKS}), one can compute a homotopy limit of (\ref{ProSeq}) in the category of derived stacks, thus obtaining a derived $Quot$-stack.

\smallskip

Here we arrive at the following question: \e{is the derived $Quot$-stack described above representable by a dg manifold of finite type?} We would like to emphasize that we ask for representability by \e{a dg manifold}, not just a derived scheme. The two notions are not equivalent (\cite{To14} \S1). 

A derived scheme is given by coherently gluing affine derived schemes, with the gluing done by weak equivalences. A dg manifold of finite type is a smooth classical quasi-projective scheme enhanced by a differential $\nopos$-graded structure sheaf. While any dg manifold defines a derived scheme, not every derived scheme is weakly equivalent to a dg manifold (loc.\@ cit.). The property of being representable by a dg manifold is much stronger than being a derived scheme. A derived scheme is representable by a dg manifold of finite type, if it admits a strict embedding into a projective space.

\subsection{The answer} The main result of this paper is that derived $Quot$-stacks described above are representable by dg manifolds of finite type. To explain the proof of this statement, i.e.\@ construction of the representing dg manifolds, we would like to start with a very simple example that makes the idea obvious.

\smallskip

Let $\Hosub[]$ be a $\mathbb C$-vector space of dimension $1$, and let $\Homod[]$ be a vector space of dimension $m>1$. The affine space
	\begin{eqn}\Baby:=\Hhom[]\lp\Hosub[],\Hosub[]\rp\times\Hhom[]\lp\Hosub[],\Homod[]\rp\end{eqn}
carries a right action by the multiplicative group $\Gm$. Explicitly $\forall c\in\Gm$ we have
	\begin{eqn}(\alpha,\beta)\longmapsto \lp c^{-1}\circ\alpha,\beta\circ c\rp,\end{eqn}
where we view $c$ acting by multiplication on $\Hosub[]$. We would like to divide $\Baby$ by this action of $\Gm$.\footnote{This is an elementary example of a classical problem (e.g.\@ \cite{Kraft} \S II.4.1) with $GL(n,\mathbb C)$ appearing instead of $\Gm$ in general.} The composition morphism
	\begin{eqn}\Hhom[]\lp\Hosub[],\Hosub[]\rp\times\Hhom[]\lp\Hosub[],\Homod[]\rp\longrightarrow \Hhom[]\lp\Hosub[],\Homod[]\rp\cong\Homod[]\end{eqn}
is $\Gm$-invariant, and gives us the \e{algebraic quotient} $\AlgQ{\Baby}{\Gm}$, which is a good quotient but not a geometric quotient (the fiber over $0\in\Homod[]$ consists of more than one orbit). To obtain a \e{geometric quotient} we need to restrict to a Zariski open part $\GoodBaby\subset\Baby$ consisting of pairs $(\alpha,\beta)$ where $\alpha\colon\Hosub[]\rightarrow\Hosub[]$ is surjective and $\beta\colon\Hosub[]\rightarrow\Homod[]$ is injective. Then we have a geometric quotient 
	\begin{eqn}\Homod[]\cong\AlgQ{\Baby}{\Gm}\supset\GeoQ{\GoodBaby}{\Gm}\cong\Homod[]\setminus\lf 0\rf.\end{eqn}
There is another quotient we can take. First we restrict to the injective part $\Hhom[]\lp\Hosub[],\Homod[]\rp\setminus\lf 0\rf$, take its projective quotient obtaining $\mathbb P^{m-1}$. Then we notice that $\Hhom[]\lp\Hosub[],\Hosub[]\rp$ defines a trivial line bundle on $\Hhom[]\lp\Hosub[],\Homod[]\rp\setminus\lf 0\rf$, that is (non-trivially) linearized with respect to the action by $\Gm$, and it descends to a line bundle on $\mathbb P^{m-1}$ (namely $\mathcal O(-1)$). We denote by $\GooBaby$ the total space of this $\mathcal O(-1)$, and call it a \e{partial quotient} of $\Baby$ by $\Gm$.

One immediately observes that the total space of $\mathcal O(-1)$ without the zero section is exactly $\GeoQ{\GoodBaby}{\Gm}$. Thus we have three different quotients and two open inclusions
	\begin{eqn}\AlgQ{\Baby}{\Gm}\hookleftarrow\GeoQ{\GoodBaby}{\Gm}\hookrightarrow\GooBaby.\end{eqn}
Now imagine that each one of these varieties is the degree $0$ part of a dg scheme, and each of the two open inclusions extends to an open inclusion of the dg schemes. If it so happens that classical points in $\AlgQ{\Baby}{\Gm}$ and $\GooBaby$ are all in the images of these two open inclusions, we have three weakly equivalent dg schemes. Notice that $\GooBaby$ is projective, $\GeoQ{\GoodBaby}{\Gm}$ is quasi-affine, while $\AlgQ{\Baby}{\Gm}$ is affine.

\medskip

The problem with taking a limit of (\ref{ProSeq}) is that all morphisms are projective. This is due to us starting with the Grassmannians (i.e.\@ with the partial quotients as in the example above) and then constructing the dg structure using tautological bundles (i.e.\@ descended bundles as in the example above). 

Instead, we can start with affine deformation problems (i.e.\@ dg affine spaces obtained by Koszul duality from dg Lie algebras), where we (initially) disregard all the symmetries and do not require $\Hosub\rightarrow\Homod$ to be injective. Then we notice that if we impose the condition of injectivity of $\Hosub\rightarrow\Homod$ only in the stretch $\lb\Hobeg,\Hoend\rb$, the Maurer--Cartan equation will impose this injectivity also in degrees $>\Hoend$. Thus, working over $\Grasi{\Hobeg}{\Hoend}$, we can take affine quotients instead of the partial ones, obtaining a sequence of \e{affine} morphisms between dg schemes that is weakly equivalent to (\ref{ProSeq}). Then we can compute the limit within the category of dg manifolds.

\subsection*{Contents of the paper}

In Section \ref{Oneone} we start with constructing a sequence of dg Lie algebras that parametrize $\ainfty$-structures on the pairs $\Hosubi{\Hobeg}{\Homin}$, $\Homodi{\Hobeg}{\Homin}$ for all $\Homin\geq\Hobeg$. For each $\Homin$ the dg Lie algebra is finite dimensional and, using Koszul and linear duality, it defines an affine dg manifold of finite type. Thus we obtain a sequence of fibrations of affine dg manifolds as $\Homin\rightarrow\infty$.

In Section \ref{Onetwo} we consider the conditions ensuring that $\Hosubi{\Hobeg}{\Homin}\rightarrow\Homodi{\Hobeg}{\Homin}$ is injective and prove the key fact (Proposition \ref{EqBefore}) that imposing injectivity in a long enough stretch $\lb\Hobeg,\Hoend\rb$ implies injectivity everywhere for solutions of the Maurer--Cartan equations.

\smallskip

In Section \ref{Twoone} we take partial quotients, i.e.\@ transfer all the dg structures to Grassmannians and show (Proposition \ref{Comparison}) that we recover the constructions form \cite{DerQuot}.

In Section \ref{Twotwo} we take the algebraic quotients and describe them using results of the classical invariant theory.

In Section \ref{Twothree} we prove two theorems that show geometric quotients being open dg subschemes in both the algebraic and the partial quotients, with both inclusions being weak equivalences.

\smallskip

Finally, in Section \ref{Three} we use the algebraic quotients to compute within the category of dg manifolds the homotopy limit when $\Homin\rightarrow\infty$. At first we do this in terms of dg manifolds of infinite type, just because it works better with functoriality. Then, using the fact that tangent complex of the limit has coherent cohomology, we construct a dg manifold of finite type that represents the homotopy limit.

\subsection*{Terminology and notation}\begin{itemize}
%\item $\ainfty$-structure
%\item Maurer--Cartan elements
\item For a vector space $V$ we write $\duap{V}$ to mean the linear dual of $V$. Similar notation is used for vector bundles.
\item For a dg complex $\Coa{\bullet}$ we write $\Coa{*}$ to mean the underlying graded object.
\item A dg algebra $\Coa{\bullet}$ is almost free, if the underlying graded algebra $\Coa{*}$ is free. 
\item A morphism of dg algebras $\Coa[1]{\bullet}\rightarrow\Coa[2]{\bullet}$ is almost free, if $\Coa[1]{*}\rightarrow\Coa[2]{*}$ is isomorphic to the canonical inclusion  $\Coa[1]{*}\rightarrow\Coa[1]{*}\coprod\Coa[3]{*}$, where $\Coa[3]{*}$ is a free algebra.
\item Our terminology for dg schemes and dg manifolds follows \cite{BKS}.
\item In particular: a dg scheme $\lp \mathcal M,\mathcal O^\bullet_\mathcal M\rp$ is a dg manifold of finite type, if $\mathcal M$ is a smooth quasi-projective scheme and $\mathcal O^\bullet_\mathcal M$ is a sheaf of dg $\mathcal O^0_\mathcal M$-algebras, that is locally almost free over $\mathcal O^0_\mathcal M$, generated by coherent $\mathcal O^0_\mathcal M$-modules (one in each negative degree). 

If $\mathcal M$ is not of finite type or the generating sheaves for $\mathcal O^\bullet_\mathcal M$ are only quasi-coherent, we speak of dg manifolds of infinite type.
\item A classical sub-scheme of a dg scheme $\lp \mathcal M,\mathcal O^\bullet_\mathcal M\rp$ is the closed subscheme of $\mathcal M$ defined by the sheaf of ideals $\homdi\lp\mathcal O^{-1}_\mathcal M\rp$.
%\item GIT and affine quotients
\item Given two vector spaces $V_1$, $V_2$ we sometimes view $V_1\oplus V_2$ as an affine space and keep writing $V_1\oplus V_2$ instead of $V_1\times V_2$.

\end{itemize}

\subsection*{Acknowledgments} \includegraphics[scale=0.2]{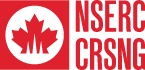} The first author acknowledges the support of the Natural Sciences and Engineering Research Council of Canada (NSERC), [RGPIN-2020-04845].

Cette recherche a été financée par le Conseil de recherches en sciences naturelles et en génie du Canada (CRSNG), [RGPIN-2020-04845].

The first author would like to thank Mikhail Kapranov for very helpful remarks and the readiness to answer questions.

\smallskip

The second author was supported by the National Science Fund of Bulgaria, National Scientific Program “VIHREN”, Project no. KP-06-DV-7, Institute of Mathematics and Informatics, Bulgarian Academy of Sciences, 1113 Sofia, Bulgaria.

\smallskip

The third author would like to thank Beijing Institute of Mathematical Sciences and Applications for excellent working conditions. {A.S. would like to further sincerely thank the Center for Mathematical Sciences and Applications at Harvard University in which the original conversations about this project were initiated and preceding articles were written, as well as the Laboratory of Mirror Symmetry in Higher School of Economics, Russian federation, for the great help and support.}

\hide{The third author would like to thank Dennis Gaitsgory, Amin Gholampour, Martijn Kool, Naichung Conan Leung, Ludmil Katzarkov and Tony Pantev for many helpful discussions and commenting on the first versions of this article. }

\section{Not taking symmetries into account}

\subsection{Classifying all $\ainfty$-structures}\label{Oneone}
Let $\lp \Vari,\streaf[]{\Vari}(1)\rp$ be a smooth, connected projective variety over $\mathbb C$, and let 
	\begin{eqn}\Hori:=\underset{\Homin\in\mathbb Z}\bigoplus\,\Hori[\Homin],\quad \Hori[\Homin]:=\Proga[]\lp\Vari,\streaf[]{\Vari}\lp\Homin\rp\rp\end{eqn} 
be its $\mathbb Z$-graded homogeneous ring. We will call $\Homin$ \ee{the degree of homogeneity}, and write it as a subscript. 

Given $\Hilpo(\Hilva)\in\mathbb Q\lb\Hilva\rb$ and a finitely generated $\mathbb Z$-graded $\Hori$-module $\Homod=\underset{\Homin\in\mathbb Z}\bigoplus\,\Homod[\Homin]$, we would like to classify all $\Hori$-submodules $\Hosub\subseteq\Homod$ that have $\Hilpo(\Hilva)$ as their Hilbert polynomial.\footnote{We will assume that $\dim_\mathbb C\Homod[\Homin]>\Hilpo(\Homin)$ for $\Homin\gg 0$.} We choose $\Hobeg\in\mathbb N$ such that each such submodule is $\Hobeg$-regular (\cite{Kleiman} Thm.\@ 1.13 p.\@ 623), in particular $\forall\Homin\geq\Hobeg$ $\dim_\mathbb C\Hosub[\Homin]=\Hilpo(\Homin)$ and the multiplication
	\begin{eqn}\Hori[\Homin-\Hobeg]\otimes\Hosub[\Hobeg]\longrightarrow\Hosub[\Homin]\end{eqn}
is surjective. This leads us to classifying $\Hosub[\geq\Hobeg]\subseteq\Homod[\geq\Hobeg]$, where $\geq\Hobeg$ means $\Homin<\Hobeg\Rightarrow\Hosub[\Homin]=\Homod[\Homin]\cong\lf 0\rf$. We do this in a derived way and, following \cite{DerQuot}, we use $\ainfty$-structures. 

\smallskip

Let $\Hosub[\geq\Hobeg]$ be a $\mathbb Z$-graded $\mathbb C$-vector space with $\dim_\mathbb C\Hosub[\Homin]=\Hilpo(\Homin)$ for $\Homin\geq\Hobeg$ and $\dim_\mathbb C\Hosub[\Homin]=0$ otherwise. Let $\Hori[+]:=\underset{\Homin\geq 1}\bigoplus\,\Hori[\Homin]$ be the irrelevant maximal ideal in $\Hori$. We view $\Hori[+]$ as a non-unital $\mathbb Z$-graded associative algebra, and first we would like to classify all $\ainfty$-module structures on $\Hosub[\geq\Hobeg]$ over $\Hori[+]$ and simultaneously all $\ainfty$-morphisms of $\ainfty$-modules $\Hosub[\geq\Hobeg]\rightarrow\Homod[\geq\Hobeg]$ over $\Hori[+]$, where $\Homod[\geq\Hobeg]$ has the given $\Hori[+]$-module structure (\cite{DerQuot} \S3.4).  For any $\Homin\geq\Hobeg$, $\Coin\geq 1$ we define
	\begin{eqn}\Lia[\Homin]{\Coin} := \Hhom\lp \Horit[+]{\Coin}\otimes\Hosub[\geq\Hobeg],\Hosub[\Homin]\rp \oplus \Hhom\lp \Horit[+]{\Coin-1}\otimes\Hosub[\geq\Hobeg],\Homod[\Homin]\rp,\end{eqn}
where $\Hhom$ stands for the space of $\mathbb C$-linear maps that have degree of homogeneity $0$.\footnote{Both $\Hosub[\Homin]$ and $\Homod[\Homin]$ are considered as $\mathbb Z$-graded vector spaces concentrated in degree $\Homin$.} Notice that $\dim_\mathbb C \Lia[\Homin]{\Coin}<\infty$. For any $\Homin_1$, $\Homin_2$, $\Coin_1$, $\Coin_2$ we have the composition map
	\begin{eqnn}\label{Compo}\Lia[\Homin_2]{\Coin_2}\otimes\Lia[\Homin_1]{\Coin_1}\longrightarrow\Lia[\Homin_2]{\Coin_1+\Coin_2},\end{eqnn}
which is necessarily trivial, if $\Homin_1\geq \Homin_2$. Alternating (\ref{Compo}) we obtain the structure of a $\pos$-graded Lie algebra on 
	\begin{eqn}\Liat{\Hobeg}{\Homin}{*}:=\underset{\underset{\Coin\geq 1}{\Hobeg\leq\Homini\leq\Homin}}{\bigoplus}\,\Lia[\Homini]{\Coin},\end{eqn}
with the grading given by $\Coin$. We will call this \ee{the homological degree} and write as a superscript. Notice that $\dim_\mathbb C \Liat{\Hobeg}{\Homin}{*}<\infty$, i.e.\@ each $\Liat{\Hobeg}{\Homin}{\Coin}$ is finite dimensional, and $\Liat{\Hobeg}{\Homin}{\Coin}\cong\lf 0\rf$ for all but finitely many $\Coin$'s. To define differentials on $\Liat{\Hobeg}{\Homin}{*}$'s we consider another sequence of finite dimensional graded Lie algebras $\Bliat{\Hobeg}{\Homin}{*}:=\underset{\underset{\Coin\geq 1}{\Hobeg\leq\Homini\leq\Homin}}{\bigoplus}\,\Blia[\Homini]{\Coin}$, where
	\begin{eqn}	\Blia[\Homini]{\Coin}:=\Lia[\Homini]{\Coin}\oplus\Hhom\lp\Horit[+]{\Coin+1},\Hori[\Homini]\rp 
	\oplus\Hhom\lp\Horit[+]{\Coin}\otimes\Homod[\geq\Hobeg],\Homod[\Homini]\rp.\end{eqn}
Multiplication on $\Hori[+]$ and the $\Hori[+]$-module structure on $\Homod[\geq\Hobeg]/\Homod[>\Homin]$ give a Maurer--Cartan element in $\Bliat{\Hobeg}{\Homin}{*}$, hence equipping $\Bliat{\Hobeg}{\Homin}{*}$ with a differential $\Lindet{\Hobeg}{\Homin}$. Clearly $\Liat{\Hobeg}{\Homin}{\bullet}:=\lp\Liat{\Hobeg}{\Homin}{*},\Lindet{\Hobeg}{\Homin}\rp$ is a dg Lie subalgebra of $\Bliat{\Hobeg}{\Homin}{\bullet}:=\lp\Bliat{\Hobeg}{\Homin}{*},\Lindet{\Hobeg}{\Homin}\rp$. We have constructed a sequence of surjective morphisms between finite dimensional differential $\pos$-graded Lie algebras
	\begin{eqnn}\label{LieSequence}\ldots\longrightarrow\Liat{\Hobeg}{\Hobeg+2}{\bullet}\longrightarrow\Liat{\Hobeg}{\Hobeg+1}{\bullet}\longrightarrow\Lia[\Hobeg]{\bullet},\end{eqnn}
where the morphisms are given by projections on direct summands. Finite dimensionality allows us to re-write (\ref{LieSequence}) in terms of finitely generated dg commutative algebras:
	\begin{eqnn}\label{ComSequence}\ldots\longleftarrow\Coat{\Hobeg}{\Hobeg+2}{\bullet}\longleftarrow\Coat{\Hobeg}{\Hobeg+1}{\bullet}\longleftarrow\Coa[\Hobeg]{\bullet},\end{eqnn}
where $\Coat{\Hobeg}{\Homin}{\bullet}=\lp\Coat{\Hobeg}{\Homin}{*},\Cindet{\Hobeg}{\Homin}\rp$, with $\Coat{\Hobeg}{\Homin}{*}$ being the free $\nopos$-graded unital commutative algebra generated by $\duap{\suspense{\Liat{\Hobeg}{\Homin}{*}}}$ and $\Cindet{\Hobeg}{\Homin}$ being the Koszul dual of $\Lindet{\Hobeg}{\Homin}$ and the Lie bracket (\cite{GinzKap} \S4). 

Let $\Boa{\Hobeg}$ be the colimit of (\ref{ComSequence}) computed in the category of dg commutative algebras. Since all algebras and all morphisms in (\ref{ComSequence}) are almost free, it is clear that $\Boa{\Hobeg}$ is an almost free dg commutative algebra. For each $\Homin$ let 
	\begin{eqn}\Voat{\Hobeg}{\Homin}:=\spec{\Coat{\Hobeg}{\Homin}{\bullet}}\end{eqn}
be the affine dg manifold of finite type defined by $\Coat{\Hobeg}{\Homin}{\bullet}$. Then (\ref{ComSequence}) becomes a sequence of fibrations of affine dg manifolds:
	\begin{eqnn}\label{ManSequence}\ldots\longrightarrow\Voat{\Hobeg}{\Hobeg+2}\longrightarrow\Voat{\Hobeg}{\Hobeg+1}\longrightarrow\Voa{\Hobeg}.\end{eqnn}
For any $\Homin\geq\Homini$ we will denote the composite map by $\Surj{\Homin}{\Homini}\colon\Voat{\Hobeg}{\Homin}\rightarrow\Voat{\Hobeg}{\Homini}$. The limit of (\ref{ManSequence}), computed in the category of stacks, is a dg affine manifold of infinite type $\Woa{\Hobeg}=\spec{\Boa{\Hobeg}}$. 

From the construction it is clear that $\Woa{\Hobeg}$ classifies all $\ainfty$-module structures on $\Hosub[\geq\Hobeg]$ and all $\ainfty$-morphisms $\Hosub[\geq\Hobeg]\rightarrow\Homod[\geq\Hobeg]$, without taking into account symmetries of $\Hosub[\geq\Hobeg]$ given by the actions of general linear groups. It is also clear that degree $0$ parts of the $\ainfty$-morphisms that $\Woa{\Hobeg}$ classifies can be arbitrary, in particular they do not have to be injective.

\subsection{Classifying $\ainfty$-submodules}\label{Onetwo}
There are two reasons for $\Woa{\Hobeg}$ not to be a suitable dg manifold to parametrize $\Hori[+]$-submodules $\Hosub[\geq\Hobeg]\subseteq\Homod[\geq\Hobeg]$:\begin{enumerate}
	\item Not all classical points in $\Woa{\Hobeg}$ correspond to submodules. Indeed, $\Woa{\Hobeg}$ parametrizes all morphisms $\Hosub[\geq\Hobeg]\rightarrow\Homod[\geq\Hobeg]$ including those that are not injective. 
	\item The internal symmetries of $\Hosub[\geq\Hobeg]$ (given by the actions of general linear groups) are not taken into account.	
\end{enumerate}
In this section we take care of the first issue. To do this we need to impose the open condition of maximality of rank in $\Hhom[\mathbb C]\lp\Hosub[\Homin],\Homod[\Homin]\rp\subseteq\Lia[\Homin]{1}$ for each $\Homin\geq\Hobeg$. This gives us open dg submanifolds $\Marat{\Hobeg}{\Homin}\subseteq\Voat{\Hobeg}{\Homin}$, $\Homin\geq\Hobeg$, and then a sequence of fibrations between quasi-affine dg manifolds
	\begin{eqnn}\label{USequence}\ldots\longrightarrow\Marat{\Hobeg}{\Hobeg+2}\longrightarrow\Marat{\Hobeg}{\Hobeg+1}\longrightarrow\Mara{\Hobeg}.\end{eqnn}
The morphisms in (\ref{USequence}) are quasi-affine, not necessarily affine, hence we cannot expect to be able to compute the limit of this sequence in the category of dg manifolds. Instead we will construct a weakly equivalent sequence of dg manifolds, almost all of whose morphisms are affine.

\smallskip

We begin with recalling, in a slightly modified form, an argument from \cite{DerQuot} \S1.4. For each $\Homin\geq\Hobeg$ we denote by $\Harat{\Hobeg}{\Homin}\subseteq\Marat{\Hobeg}{\Homin}$ the classical subscheme. This gives us a sequence of quasi-affine morphisms of schemes
	\begin{eqn}\ldots\overset{\Surj{\Hobeg+3}{\Hobeg+2}}\longrightarrow\Harat{\Hobeg}{\Hobeg+2}
	\overset{\Surj{\Hobeg+2}{\Hobeg+1}}\longrightarrow\Harat{\Hobeg}{\Hobeg+1}
	\overset{\Surj{\Hobeg+1}{\Hobeg}}\longrightarrow\Hara{\Hobeg}.\end{eqn}
For any $\Homin\geq\Homini\geq\Hobeg$ we denote by $\Surj{\Homin}{\Homini}\lp\Harat{\Hobeg}{\Homin}\rp\subseteq\Harat{\Hobeg}{\Homini}$ the scheme-theoretic image of $\Surj{\Homin}{\Homini}$. We define
	\begin{eqn}\Sarat{\Hobeg}{\Homini}:=\underset{\Homin\geq\Homini}\bigcap\,\Surj{\Homin}{\Homini}\lp\Harat{\Hobeg}{\Homin}\rp\subseteq\Harat{\Hobeg}{\Homini}.\end{eqn}
Since $\Harat{\Hobeg}{\Homini}$	is a Noetherian scheme, there is $\Hoendi{\Homini}\geq\Homini$ such that
	\begin{eqn}\Sarat{\Hobeg}{\Homini}=\Surj{\Hoendi{\Homini}}{\Homini}\lp\Harat{\Hobeg}{\Hoendi{\Homini}}\rp.\end{eqn}
In the case $\Homini=\Hobeg$ we will write $\Hoend$ instead of $\Hoendi{\Hobeg}$.

\begin{proposition}\label{GoodGen} Let $\pt\in\Sara{\Hobeg}$ be a closed point, let $\Hosub[\Hobeg]\subseteq\Homod[\Hobeg]$ be the corresponding $\mathbb C$-linear subspace, and let $\Hogen[\geq\Hobeg]\subseteq\Homod[\geq\Hobeg]$ be the $\Hori[+]$-submodule generated by $\Hosub[\Hobeg]$. Then $\forall \Homin\geq\Hobeg$ $\dim_\mathbb C\Hogen[\Homin]=\Hilpo(\Homin)$.\end{proposition}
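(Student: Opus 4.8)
The plan is to translate the statement into honest commutative algebra and then feed in Castelnuovo--Mumford regularity through Gotzmann persistence. First I would describe the closed points of $\Harat{\Hobeg}{\Homin}$ explicitly. The classical subscheme of $\Marat{\Hobeg}{\Homin}$ is cut out by $\homdi\lp\mathcal O^{-1}\rp$, i.e.\@ it is the Maurer--Cartan locus inside the homological-degree-one part $\Liat{\Hobeg}{\Homin}{1}$. This part consists only of the bilinear maps $\Hori[+]\otimes\Hosub[\geq\Hobeg]\to\Hosub[\geq\Hobeg]$ and the linear maps $\Hosub[\geq\Hobeg]\to\Homod[\geq\Hobeg]$, so a Maurer--Cartan element $\MC$ carries no higher $\ainfty$-components and the equation $\Lindet{\Hobeg}{\Homin}\MC+\tfrac12\lb\MC,\MC\rb=0$ says precisely that $\MC$ is a strict $\Hori[+]$-module structure on the truncation of $\Hosub[\geq\Hobeg]$ to $\lb\Hobeg,\Homin\rb$ together with a strict module morphism into $\Homod[\geq\Hobeg]$. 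On $\Marat{\Hobeg}{\Homin}$ this morphism is fibrewise injective, so its image is an honest truncated $\Hori[+]$-submodule $L_{[\Hobeg,\Homin]}=\bigoplus_{\Hobeg\leq\Homini\leq\Homin}L_{\Homini}$, $L_{\Homini}\subseteq\Homod[\Homini]$, with $\dim_\mathbb C L_{\Homini}=\Hilpo(\Homini)$ and $\Hori[i]\cdot L_{\Homini}\subseteq L_{\Homini+i}$ whenever $\Homini+i\leq\Homin$. Thus closed points of $\Harat{\Hobeg}{\Homin}$ are, up to the internal symmetries disregarded in this section, exactly such truncated submodules, and the subspace attached to $\pt$ is $L_{\Hobeg}=\Hosub[\Hobeg]$.

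Next I would settle extendability and the easy inequality. Sending a point to the images $L_{\Homini}=\mathrm{im}\lp\Hosub[\Homini]\to\Homod[\Homini]\rp$ maps $\Harat{\Hobeg}{\Homin}$ into the closed subscheme $Q_{\Homin}\subseteq\Grasi{\Hobeg}{\Homin}$ of multiplication-closed flags with $\dim L_{\Homini}=\Hilpo(\Homini)$. As $\Grasi{\Hobeg}{\Homin}$ is projective, the projection of $Q_{\Homin}$ to the degree-$\Hobeg$ factor $\Gras{}\lp\Hilpo(\Hobeg),\Homod[\Hobeg]\rp$ is proper, hence its image $Z_{\Homin}$ is closed. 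Consequently any closed point of $\Sara{\Hobeg}=\bigcap_{\Homin\geq\Hobeg}\Surj{\Homin}{\Hobeg}\lp\Harat{\Hobeg}{\Homin}\rp$ has its degree-$\Hobeg$ subspace in $\bigcap_{\Homin}Z_{\Homin}$, and since each $Z_{\Homin}$ is the genuine image of the proper map $Q_{\Homin}\to\Gras{}\lp\Hilpo(\Hobeg),\Homod[\Hobeg]\rp$, the subspace $\Hosub[\Hobeg]$ extends to a truncated submodule $L_{[\Hobeg,\Homin]}$ with $L_{\Hobeg}=\Hosub[\Hobeg]$ for every $\Homin$. Multiplication-closedness then gives $\Hogen[\Homin]=\Hori[\Homin-\Hobeg]\cdot\Hosub[\Hobeg]\subseteq L_{\Homin}$, whence $\dim_\mathbb C\Hogen[\Homin]\leq\Hilpo(\Homin)$ for all $\Homin\geq\Hobeg$; in particular the Hilbert polynomial of $\Hogen[\geq\Hobeg]$ does not exceed $\Hilpo$.

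Finally comes the reverse inequality, which is the heart of the matter. Fix $\Homin$ larger than the Gotzmann number attached to $\Hilpo$ and $\Homod$, and let $\Hosub[\geq\Hobeg]'\subseteq\Homod[\geq\Hobeg]$ be the honest $\Hori[+]$-submodule of $\Homod$ generated by $L_{[\Hobeg,\Homin]}$. Multiplication-closedness forces $\Hosub[\Homini]'=L_{\Homini}$ for all $\Homini\leq\Homin$, so $\dim_\mathbb C\Hosub[\Homini]'=\Hilpo(\Homini)$ on the entire range $\Hobeg\leq\Homini\leq\Homin$. Because this range reaches past the Gotzmann number, Gotzmann's persistence theorem (applied to the submodule $\Hosub[\geq\Hobeg]'$ of $\Homod$) forces the Hilbert polynomial of $\Hosub[\geq\Hobeg]'$ to be exactly $\Hilpo$. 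Now the defining property of $\Hobeg$ applies: any submodule of $\Homod$ with Hilbert polynomial $\Hilpo$ is $\Hobeg$-regular, hence generated in degree $\Hobeg$, so $\Hosub[\geq\Hobeg]'$ is generated by $\Hosub[\Hobeg]'=\Hosub[\Hobeg]$ and therefore coincides with $\Hogen[\geq\Hobeg]$. Regularity then gives $\dim_\mathbb C\Hogen[\Homini]=\dim_\mathbb C\Hosub[\Homini]'=\Hilpo(\Homini)$ for every $\Homini\geq\Hobeg$, as claimed. I expect the main obstacle to be exactly this step: manufacturing an honest submodule with the correct Hilbert \emph{polynomial} out of data given only over a finite stretch, where the uniform bound of Gotzmann persistence is indispensable, since the regularity of the submodule generated by $L_{[\Hobeg,\Homin]}$ is not controlled a priori. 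A secondary but real subtlety, handled above via properness of the Grassmannian projection, is that closed points of the scheme-theoretic images $\Surj{\Homin}{\Hobeg}\lp\Harat{\Hobeg}{\Homin}\rp$ must be shown to correspond to genuinely extendable subspaces rather than to mere boundary limits.
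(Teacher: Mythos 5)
Your proposal is correct in outline, and it takes a genuinely different route from the paper's. The paper never works one stretch at a time: it shows that a closed point of $\Sara{\Hobeg}$ lifts to a \emph{single compatible infinite sequence} of points of the $\Marat{\Hobeg}{\Homin}$'s, using Noetherian stabilization of the images $\Sarat{\Hobeg}{\Homini}=\bigcap_{\Homin>\Homini}\Surj{\Homin}{\Homini}\lp\Sarat{\Hobeg}{\Homin}\rp$ to lift one degree at a time. The compatible lift produces an honest $\Hori[+]$-submodule whose Hilbert \emph{function} equals $\Hilpo(\Homin)$ in every degree $\Homin\geq\Hobeg$, so its Hilbert polynomial is $\Hilpo$ for free, and the defining property of $\Hobeg$ (every submodule with Hilbert polynomial $\Hilpo$ is $\Hobeg$-regular, hence generated in degree $\Hobeg$) finishes the proof; no Gotzmann-type input appears. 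You refuse to build a compatible lift: for each $\Homin$ separately you extend $\Hosub[\Hobeg]$ to a multiplication-closed flag via properness of the flag scheme $Q_\Homin\subseteq\Grasi{\Hobeg}{\Homin}$ over the degree-$\Hobeg$ Grassmannian. This buys you something real: it cleanly disposes of the set-theoretic versus scheme-theoretic image issue (closed points of a scheme-theoretic image need not be actual images), a point the paper's lifting step passes over in silence and which you correctly identify as a genuine subtlety. The price is that your flags for different $\Homin$ need not be compatible, so you must recover a Hilbert polynomial from finite-stretch data, which forces you into Gotzmann persistence for modules -- a much heavier tool, and one proved in the literature over a standard graded polynomial ring, so you also need $\Hori$ to be generated in degree one (or to be replaced by a polynomial ring surjecting onto it) for your generated submodules and their generation degrees to transfer.

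However, the persistence step does not go through as written. Persistence theorems require extremal (Macaulay-maximal) growth to be \emph{verified} at some step $t\to t+1$ with $t$ at least the top generation degree of the submodule; concretely, you must know the Hilbert function of the quotient at two consecutive degrees, the first of which is $\geq$ the generation degree. Your $\Hosub[\geq\Hobeg]'$ is generated by $L_{[\Hobeg,\Homin]}$, hence in degrees up to $\Homin$, while its Hilbert function is known only up to degree $\Homin$. So there is no admissible $t$: for $t<\Homin$ the generation hypothesis fails (and persistence below the generation degree is simply false -- $L_\Homin$ need not equal $\Hori[1]\cdot L_{\Homin-1}$, which is exactly a new generator in degree $\Homin$), while for $t=\Homin$ the value at degree $\Homin+1$ is the unknown you are trying to compute. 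The repair stays inside your own toolkit: take the flag for the stretch $[\Hobeg,\Homin+1]$, which exists by your properness argument, and let the submodule be generated only by its truncation $L_{[\Hobeg,\Homin]}$. It is then generated in degrees $\leq\Homin$; in degree $\Homin+1$ its piece is contained in $L_{\Homin+1}$, so the quotient there has dimension $\geq\dim_\mathbb C\Homod[\Homin+1]-\Hilpo(\Homin+1)$, while Macaulay's growth bound for quotients of $\Homod$ (which needs no generation hypothesis), combined with the defining property of the Gotzmann number of the quotient polynomial, bounds it above by the same number. This verifies extremal growth at the step $\Homin\to\Homin+1$, persistence now applies, and the rest of your argument -- $\Hobeg$-regularity, generation in degree $\Hobeg$, identification with $\Hogen[\geq\Hobeg]$, and equality of dimensions in all degrees -- goes through exactly as you wrote it.
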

\begin{proof} First suppose that $\pt\in\Sara{\Hobeg}\subseteq\Mara{\Hobeg}$ lifts to 
	\begin{eqnn}\label{LiftSeq}\spec{\mathbb C}\longrightarrow\lf \ldots\rightarrow \Marat{\Hobeg}{\Hobeg+2}\rightarrow\Marat{\Hobeg}{\Hobeg+1}\rightarrow\Mara{\Hobeg}\rf,\end{eqnn}
i.e.\@ we can choose $\lf\pt_\Homin\in\Marat{\Hobeg}{\Homin}\rf_{\Homin\geq\Hobeg}$ such that $\pt_\Hobeg=\pt$ and for any $\Homin\geq\Homini\geq\Hobeg$ $\Surj{\Homin}{\Homini}\lp\pt_\Homin\rp=\pt_\Homini$. This gives us an $\Hori[+]$-submodule $\Hosub[\geq\Hobeg]\subseteq\Homod[\geq\Hobeg]$ such that $\forall \Homin\geq\Hobeg$ $\dim_\mathbb C\Hosub[\Homin]=\Hilpo(\Homin)$. By assumption each such submodule is $\Hobeg$-regular, in particular it is generated by $\Hosub[\Hobeg]$, i.e.\@ $\Hosub[\geq\Hobeg]=\Hogen[\geq\Hobeg]$, and we are done.

\smallskip

We claim that every $\pt\in\Sara{\Hobeg}$ lifts to (\ref{LiftSeq}). We observe that
	\begin{eqn}\forall \Homini\geq\Hobeg\quad \Sarat{\Hobeg}{\Homini}=\underset{\Homin>\Homini}\bigcap\,\Surj{\Homin}{\Homini}\lp\Sarat{\Hobeg}{\Homin}\rp.\end{eqn}
Hence $\forall\Homini\geq\Hobeg$ any $\spec{\mathbb C}\rightarrow\Sarat{\Hobeg}{\Homini}$ lifts to $\Sarat{\Hobeg}{\Homini+1}$, implying our claim.\end{proof}

\smallskip

For any $\Homin\geq\Hoend$ let $\Varat{\Hoend}{\Homin}\subseteq\Voat{\Hobeg}{\Homin}$ be the pullback
	\begin{eqn}\xymatrix{\Varat{\Hoend}{\Homin}\ar[rr]\ar[d] && \Voat{\Hobeg}{\Homin}\ar[d]\\
											\Marat{\Hobeg}{\Hoend}\ar[rr] && \Voat{\Hobeg}{\Hoend}}\end{eqn}
computed in the category of dg manifolds. We have a sequence of affine morphisms
	\begin{eqnn}\label{QuasiAff}\ldots\longrightarrow\Varat{\Hoend}{\Hoend+2}\longrightarrow\Varat{\Hoend}{\Hoend+1}\longrightarrow\Varat{\Hoend}{\Hoend}\cong
	\Marat{\Hobeg}{\Hoend}.\end{eqnn}
	
\begin{proposition}\label{EqBefore} For any $\Homin\geq\Hoend$ the inclusion $\Marat{\Hobeg}{\Homin}\hookrightarrow\Varat{\Hoend}{\Homin}$ is a weak equivalence.\end{proposition}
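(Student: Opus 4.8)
\emph{Approach.} Both $\Marat{\Hobeg}{\Homin}$ and $\Varat{\Hoend}{\Homin}$ sit inside $\Voat{\Hobeg}{\Homin}$ as open dg submanifolds: the first is carved out by requiring $\Hosub[\Homini]\to\Homod[\Homini]$ to have maximal rank for every $\Homini\in\lb\Hobeg,\Homin\rb$, whereas the second, being the pullback of $\Marat{\Hobeg}{\Hoend}\hookrightarrow\Voat{\Hobeg}{\Hoend}$ along $\Surj{\Homin}{\Hoend}$, imposes this only for $\Homini\in\lb\Hobeg,\Hoend\rb$. Since $\Homin\geq\Hoend$ the first condition is stronger, so the map in the statement is genuinely an open immersion. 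For such a map the restricted structure sheaf of $\Varat{\Hoend}{\Homin}$ is that of $\Marat{\Hobeg}{\Homin}$, while the cohomology sheaves of $\mathcal O^\bullet_{\Varat{\Hoend}{\Homin}}$ are modules over $\mathcal O^0/\homdi\lp\mathcal O^{-1}\rp$ and are therefore supported on the classical subscheme. Consequently the plan is to reduce everything to a statement about closed points: it suffices to show that the classical subscheme of $\Varat{\Hoend}{\Homin}$ is set-theoretically contained in the open locus $\Marat{\Hobeg}{\Homin}$, for then the open immersion induces isomorphisms on all cohomology sheaves and on classical subschemes, i.e.\@ is a weak equivalence.

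\emph{The core step.} Let $\pt$ be a closed point of the classical subscheme of $\Varat{\Hoend}{\Homin}$. Being a Maurer--Cartan element of $\Liat{\Hobeg}{\Homin}{\bullet}$, which is concentrated in homological degree $1$, it is precisely a strict $\Hori[+]$-module structure on $\Hosubi{\Hobeg}{\Homin}$ together with an $\Hori[+]$-module morphism $\phi\colon\Hosubi{\Hobeg}{\Homin}\to\Homodi{\Hobeg}{\Homin}$, and by construction of $\Varat{\Hoend}{\Homin}$ the component $\phi_\Homini$ is injective for $\Homini\in\lb\Hobeg,\Hoend\rb$. First I would push $\pt$ down along $\Surj{\Homin}{\Hoend}$: Maurer--Cartan elements go to Maurer--Cartan elements and the components of $\phi$ in degrees $\leq\Hoend$ are unchanged, so the image lands in the classical subscheme $\Harat{\Hobeg}{\Hoend}$. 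Pushing further along $\Surj{\Hoend}{\Hobeg}$ then lands in $\Surj{\Hoend}{\Hobeg}\lp\Harat{\Hobeg}{\Hoend}\rp=\Sara{\Hobeg}$, by the very definition of $\Hoend$. Hence Proposition \ref{GoodGen} applies to this image point, whose associated subspace of $\Homod[\Hobeg]$ is $\phi_\Hobeg\lp\Hosub[\Hobeg]\rp$: the $\Hori[+]$-submodule $\Hogen[\geq\Hobeg]$ it generates satisfies $\dim_\mathbb C\Hogen[\Homini]=\Hilpo(\Homini)$ for all $\Homini\geq\Hobeg$.

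\emph{Conclusion and the main difficulty.} Since $\phi$ is a morphism of $\Hori[+]$-modules, the image under $\phi$ of the submodule of $\Hosubi{\Hobeg}{\Homin}$ generated by $\Hosub[\Hobeg]$ is the submodule generated by $\phi_\Hobeg\lp\Hosub[\Hobeg]\rp$, which agrees with $\Hogen[\geq\Hobeg]$ in degrees $\leq\Homin$; in particular $\Hogen[\Homini]\subseteq\phi\lp\Hosub[\Homini]\rp$ for every $\Homini\in\lb\Hobeg,\Homin\rb$. Using $\dim_\mathbb C\Hosub[\Homini]=\Hilpo(\Homini)$ this yields $\Hilpo(\Homini)=\dim_\mathbb C\Hogen[\Homini]\leq\dim_\mathbb C\phi\lp\Hosub[\Homini]\rp\leq\dim_\mathbb C\Hosub[\Homini]=\Hilpo(\Homini)$, whence all inequalities are equalities and $\phi_\Homini$ is injective for every $\Homini\in\lb\Hobeg,\Homin\rb$. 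Thus $\pt\in\Marat{\Hobeg}{\Homin}$, completing the reduction. The step I expect to be the crux is the passage to $\Sara{\Hobeg}$: the whole point is that injectivity imposed only on the \emph{finite} stretch $\lb\Hobeg,\Hoend\rb$ must force the image point into $\Sara{\Hobeg}$, and not merely into some a priori larger subset of $\Hara{\Hobeg}$, which is exactly what the stabilization defining $\Hoend$ provides; once there, Proposition \ref{GoodGen} propagates the correct dimensions into all degrees $>\Hoend$, which is the real substance of the claim. The only routine verification left is that the description of classical points as strict modules-with-morphism is compatible with the projections $\Surj{\Homin}{\Homini}$, legitimising the two descent steps.
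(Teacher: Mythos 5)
Your proposal is correct and takes essentially the same route as the paper's own proof: reduce the open inclusion to surjectivity on classical points, identify a classical point with a strict $\Hori[+]$-module structure plus module morphism, push it down to $\Sara{\Hobeg}$ (using that $\Sara{\Hobeg}=\Surj{\Hoend}{\Hobeg}\lp\Harat{\Hobeg}{\Hoend}\rp$ by the choice of $\Hoend$), invoke Proposition~\ref{GoodGen}, and conclude injectivity in all degrees by the same dimension count. The only differences are expository: you justify the reduction via the support of the cohomology sheaves and spell out the two-step descent, both of which the paper leaves implicit.
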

\begin{proof} By construction $\Marat{\Hobeg}{\Homin}\hookrightarrow\Varat{\Hoend}{\Homin}$ is an open inclusion, hence it is enough to show that the morphism between the corresponding classical schemes is an isomorphism. This classical morphism is also an open inclusion, hence it is enough to show that the corresponding map between sets of closed points is surjective.

A closed point $\pt\in\Varat{\Hoend}{\Homin}$ is given by a Maurer--Cartan element $\MC$ in $\Liat{\Hobeg}{\Homin}{\bullet}$, which defines an $\Hori[+]$-module structure on $\Hosubi{\Hobeg}{\Homin}$ and a morphism of $\Hori[+]$-modules $\Hosubi{\Hobeg}{\Homin}\rightarrow\Homodi{\Hobeg}{\Homin}$. Since $\Homin\geq\Hoend$ we have $\Surj{\Homin}{\Hobeg}\lp\pt\rp\in\Sara{\Hobeg}$. Let $\Hogen[\geq\Hobeg]\subseteq\Homodi{\Hobeg}{\Homin}$ be the $\Hori[+]$-submodule generated by $\Hosub[\Hobeg]\subseteq\Homod[\Hobeg]$. According to Proposition \ref{GoodGen} we have $\forall \Homini\geq\Hobeg$ $\dim_\mathbb C\Hogen[\Homini]=\Hilpo(\Homini)$. For any $\Homin\geq\Homini>\Hoend$ the image of $\Hosub[\Homini]\rightarrow\Homod[\Homini]$ has dimension $\leq\Hilpo(\Homini)$ and it must contain $\Hogen[\Homini]$. Thus $\Hosubi{\Hobeg}{\Homin}\rightarrow\Homodi{\Hobeg}{\Homin}$ is injective, i.e.\@ $\pt\in\Marat{\Hobeg}{\Homin}$.\end{proof}

\hide{\smallskip

Since every morphism in (\ref{QuasiAff}) is affine we can compute the limit of this sequence in the category of dg manifolds (of infinite type). We denote this limit by  $\Vara{\Hoend}$. Choosing any $\Hobeg'\geq\Hobeg$ we obtain the corresponding $\Hoend':=\Hoendi{\Hobeg'}$. Let $\Hoend'':=\max (\Hoend,\Hoend')$. We {\red have} canonical weak equivalences
	\begin{eqn}\Vara[\Hobeg]{\Hoend}\longleftarrow\Vara[\Hobeg]{\Hoend''}\longrightarrow\Vara[\Hobeg']{\Hoend''}\longrightarrow\Vara[\Hobeg']{\Hoend'}.\end{eqn}}

\section{Dividing by the symmetries}
We fix the natural numbers $\Hobeg\leq\Hoend$ as in the previous section. For any $\Homini\geq\Hobeg$ let $\Group{\Homini}:={\rm GL}\lp\Hilpo(\Homini),\mathbb C\rp$. Choosing a basis in $\Hosub[\Homini]$ we obtain a left action of $\Group{\Homini}$ on $\Hosub[\Homini]$ (recall that $\dim\Hosub[\Homini]=\Hilpo(\Homini)$). For each $\Coin\geq 1$ and each $\Homin\geq\Homini$ this gives us a right action of $\Group{\Homini}$ on 
	\begin{eqn}\Hhom\lp\Horit[+]{\Coin}\otimes\Hosub[\Homini],\Hosubi{\Homini+1}{\Homin}\rp \oplus
	\Hhom\lp\Horit[+]{\Coin-1}\otimes\Hosub[\Homini],\Homodi{\Homini}{\Homin}\rp,\end{eqn}
and a left action on $\Hhom\lp\Horit[+]{\Coin}\otimes\Hosubi{\Hobeg}{\Homini-1},\Hosub[\Homini]\rp$. Using inverses in $\Group{\Homini}$ we view the latter as a right action as well. Composition of morphisms is clearly invariant with respect to this action. Altogether we have a right action of $\Group{\Homini}$ on the dg affine manifold $\Voat{\Hobeg}{\Homin}$.  For any $\Homin\geq\Hoend$ we denote 
	\begin{eqn}\Groupi{\Hobeg}{\Homin}:=\underset{\Hobeg\leq\Homini\leq\Homin}\prod\Group{\Homini}.\end{eqn}
It is clear that the actions of $\Group{\Homini}$, $\Group{\Homini'}$ on $\Voat{\Hobeg}{\Homin}$ commute, if $\Homini\neq\Homini'$. Hence $\Voat{\Hobeg}{\Homin}$ carries a right action of $\Groupi{\Hobeg}{\Homin}$. It is also clear that the open dg submanifolds $\Marat{\Hobeg}{\Homin}\subseteq\Varat{\Hoend}{\Homin}\subseteq\Voat{\Hobeg}{\Homin}$ are invariant with respect to this action, since they are defined by putting conditions on ranks of the homomorphisms.

We would like to divide by the action of $\Groupi{\Hobeg}{\Homin}$ and obtain (in the limit $\Homin\rightarrow\infty$) the dg Quot-scheme as a result. Here we have to be careful, as there are different ways of taking quotients. There are not only different conditions one can put on the fibers of a quotient map, but also constructions themselves can be different. We will consider $3$ of them.

\subsection{Taking a partial quotient}\label{Twoone}
The dg manifold $\Marat{\Hobeg}{\Homin}$ is quasi-affine, not affine, hence we should not expect a meaningful quotient obtained by taking the $\Groupi{\Hobeg}{\Homin}$-invariant globally defined functions. Instead, a projective quotient should be constructed. Let 
	\begin{eqn}\Emb[{[\Hobeg,\Homin]}]:=\underset{\Hobeg\leq\Homini\leq\Homin}\bigoplus\Emb[\Homini]\lp \Hosub[\Homini],\Homod[\Homini]\rp\subset
	\underset{\Hobeg\leq\Homini\leq\Homin}\bigoplus\Hhom[\mathbb C]\lp \Hosub[\Homini],\Homod[\Homini]\rp\end{eqn}
be the open subset consisting of all injective morphisms. Denoting by $\Marat[0]{\Hobeg}{\Homin}$ the degree $0$ part of the dg manifold $\Marat{\Hobeg}{\Homin}$ we have an affine morphism
	\begin{eqnn}\label{LongBundle}\Marat[0]{\Hobeg}{\Homin}\longrightarrow\Emb[{[\Hobeg,\Homin]}].\end{eqnn}
In fact, it is a trivial vector bundle, whose fiber is $\Hhom\lp\Hori[+]\otimes\Hosubi{\Hobeg}{\Homin},\Hosubi{\Hobeg}{\Homin}\rp$. Taking a projective quotient of $\Emb[{[\Hobeg,\Homin]}]$ by the action of $\Groupi{\Hobeg}{\Homin}$ we obtain a product of Grassmannians
	\begin{eqn}\Grasi{\Hobeg}{\Homin}:=\underset{\Hobeg\leq\Homini\leq\Homin}\prod \Gras{}\lp\Hilpo(\Homini),\dim\Homod[\Homini]\rp.\end{eqn}
\hide{(e.g.\@ \cite{Mukai} Remark 6.14 (iv), p.\@ 194).} The quotient map $\Emb[{[\Hobeg,\Homin]}]\longrightarrow\Grasi{\Hobeg}{\Homin}$ is a principal $\Groupi{\Hobeg}{\Homin}$-bundle, hence $\Groupi{\Hobeg}{\Homin}$-linearized coherent sheaves on $\Emb[{[\Hobeg,\Homin]}]$ descend functorially to $\Grasi{\Hobeg}{\Homin}$ (e.g.\@ \cite{HuyLehn} Thm.\@ 4.2.14 p.\@ 98). E.g.\@ (\ref{LongBundle}) descends to a vector bundle on $\Grasi{\Hobeg}{\Homin}$. 

The descent construction is given by first taking the direct image and then the $\Groupi{\Hobeg}{\Homin}$-invariant sections. It is functorial with respect to invariant morphisms, therefore the dg structure sheaf of $\Marat{\Hobeg}{\Homin}$ descends to a dg structure sheaf, giving us a dg manifold $\Narat{\Hobeg}{\Homin}$ with an affine morphism $\Narat{\Hobeg}{\Homin}\rightarrow\Grasi{\Hobeg}{\Homin}$.
	
\begin{proposition}\label{Comparison} For any $\Homin\geq\Hobeg$ the dg manifold $\Narat{\Hobeg}{\Homin}$ is isomorphic to the derived Quot-scheme $RG_A(h,M_{[\Hobeg,\Homin]})$ defined in \cite{DerQuot}, page 435. \end{proposition}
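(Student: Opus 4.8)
The plan is to compare the two dg manifolds after pulling everything back along the principal $\Groupi{\Hobeg}{\Homin}$-bundle $\Emb[{[\Hobeg,\Homin]}]\longrightarrow\Grasi{\Hobeg}{\Homin}$. Descent along a principal bundle is an equivalence between $\Groupi{\Hobeg}{\Homin}$-equivariant coherent sheaves on $\Emb[{[\Hobeg,\Homin]}]$ and coherent sheaves on $\Grasi{\Hobeg}{\Homin}$ (\cite{HuyLehn} Thm.\@ 4.2.14), and by construction $\Narat{\Hobeg}{\Homin}$ is precisely the descent of the equivariant dg structure sheaf of $\Marat{\Hobeg}{\Homin}$. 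It therefore suffices to exhibit a $\Groupi{\Hobeg}{\Homin}$-equivariant isomorphism between this structure sheaf and the pullback to $\Emb[{[\Hobeg,\Homin]}]$ of the structure sheaf of $RG_A(h,M_{[\Hobeg,\Homin]})$; since descent and pullback are mutually inverse, such an isomorphism descends to the asserted isomorphism over $\Grasi{\Hobeg}{\Homin}$.

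First I would recall the explicit form of $RG_A(h,M_{[\Hobeg,\Homin]})$ from \cite{DerQuot}: over $\Grasi{\Hobeg}{\Homin}$ one has the tautological subbundles $\mathcal S_\Homini\hookrightarrow\Homod[\Homini]\otimes\mathcal O$, and its derived structure is an almost free dg algebra whose generators in each negative homological degree form a coherent bundle assembled from the $\mathcal S_\Homini$ and the fixed module $\Homod[\geq\Hobeg]$, encoding the higher $A_\infty$ relations, with a differential built from the multiplication on $\Hori[+]$ and the tautological module structure. The key observation is that pulling this tautological data back along $\Emb[{[\Hobeg,\Homin]}]\longrightarrow\Grasi{\Hobeg}{\Homin}$ trivializes it: each $\mathcal S_\Homini$ becomes the constant bundle with fibre $\Hosub[\Homini]$ carrying the standard $\Group{\Homini}$-linearization. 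Under this trivialization the graded generating bundle becomes the constant bundle with fibre $\duap{\suspense{\Liat{\Hobeg}{\Homin}{*}}}$, i.e.\@ exactly the generators of $\Coat{\Hobeg}{\Homin}{\bullet}$.

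Next I would match the differentials. On the $\Marat{\Hobeg}{\Homin}$ side the differential is, by definition, the Koszul dual of $\Lindet{\Hobeg}{\Homin}$ together with the Lie bracket of $\Liat{\Hobeg}{\Homin}{\bullet}$ (\cite{GinzKap} \S4). Here $\Lindet{\Hobeg}{\Homin}$ records the multiplication on $\Hori[+]$ and the $\Hori[+]$-module structure on $\Homod[\geq\Hobeg]$ (this being the content of the Maurer--Cartan element in $\Bliat{\Hobeg}{\Homin}{\bullet}$), while the bracket records composition of the tautological homomorphisms. These are exactly the two ingredients out of which the differential of $RG_A(h,M_{[\Hobeg,\Homin]})$ is assembled, so under the trivialization of the previous paragraph the two differentials coincide. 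This produces a $\Groupi{\Hobeg}{\Homin}$-equivariant isomorphism of the two dg structure sheaves over $\Marat[0]{\Hobeg}{\Homin}$, the total space of the trivial bundle over $\Emb[{[\Hobeg,\Homin]}]$ with fibre $\Hhom(\Hori[+]\otimes\Hosubi{\Hobeg}{\Homin},\Hosubi{\Hobeg}{\Homin})$; descending this isomorphism along the principal bundle then yields $\Narat{\Hobeg}{\Homin}\cong RG_A(h,M_{[\Hobeg,\Homin]})$.

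The main obstacle I anticipate is the bookkeeping in this last matching. One must check that the suspension and sign conventions of the Koszul duality in \cite{GinzKap} line up with the grading and differential conventions of \cite{DerQuot}, and in particular that the Lie bracket contribution reproduces the entire tower of higher $A_\infty$ relations -- not merely the quadratic submodule condition $\Hori[1]\cdot\mathcal S\subseteq\mathcal S$ -- with the correct coefficients. By contrast, checking that the two constructions have the same classical truncation is comparatively routine: both cut out the locus in $\Grasi{\Hobeg}{\Homin}$ where the image of $\Hori[+]\otimes\mathcal S$ lands in $\mathcal S$, so agreement of the degree-$0$ schemes reduces to identifying the standard equations defining the classical $Quot$-scheme inside the product of Grassmannians. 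The substantive content is the identification of the full negatively graded generators and the differential, and it is there that the equivariant trivialization of the tautological bundles does the essential work.
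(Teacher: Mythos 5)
Your proposal is correct and takes essentially the same route as the paper: the paper's proof also rests on the observation that the two constructions differ only in the order of the two operations (forming the multi-linear-map dg data versus quotienting by $\Groupi{\Hobeg}{\Homin}$), which is exactly the statement that pullback and descent along the principal $\Groupi{\Hobeg}{\Homin}$-bundle $\Emb[{[\Hobeg,\Homin]}]\rightarrow\Grasi{\Hobeg}{\Homin}$ are mutually inverse and interchange the structure sheaf of $\Marat{\Hobeg}{\Homin}$ with the tautological dg data of \cite{DerQuot}. Your version spells out the trivialization of the tautological bundles over $\Emb[{[\Hobeg,\Homin]}]$ and the matching of generators and differentials that the paper leaves implicit in ``the results are canonically isomorphic,'' but the underlying argument is the same.
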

\begin{proof} The only difference between $\Narat{\Hobeg}{\Homin}$ and the construction in \cite{DerQuot} is in the order of the following two operations: defining the dg Lie algebra of multi-linear maps that encodes $\ainfty$-structures on the pair $\Hosubi{\Hobeg}{\Homin}$, $\Homodi{\Hobeg}{\Homin}$, and taking the quotient with respect to the action by $\Groupi{\Hobeg}{\Homin}$.

In loc.\@ cit.\@ one starts with $\Grasi{\Hobeg}{\Homin}$, takes all the necessary multi-linear maps involving the trivial bundle with fiber $\Homodi{\Hobeg}{\Homin}$ and its tautological sub-bundle on $\Grasi{\Hobeg}{\Homin}$. Then one imposes the condition that the linear map from the tautological sub-bundle to $\Homodi{\Hobeg}{\Homin}$ is the tautological inclusion.

Here we have started with $\underset{\Hobeg\leq\Homini\leq\Homin}\bigoplus \Emb\lp \Hosub[\Homini],\Homod[\Homini]\rp$ instead of $\Grasi{\Hobeg}{\Homin}$, took all the necessary multi-linear maps, and only then passed to $\Grasi{\Hobeg}{\Homin}$ by taking the projective quotient. The results are canonically isomorphic.\end{proof}

\begin{remark} Notice that the isomorphism in Proposition \ref{Comparison} is independent of any choices, i.e.\@ it is indeed canonical. Moreover, the morphisms in (\ref{USequence}) are equivariant with respect to the group actions, i.e.\@ with respect to the obvious projections $\lf \Groupi{\Hobeg}{\Homin}\rightarrow\Groupi{\Hobeg}{\Homin-1}\rf_{\Homin>\Hobeg}$. This implies that we obtain a sequence of fibrations of dg manifolds
	\begin{eqnn}\label{UGSequence}\ldots\longrightarrow\Narat{\Hobeg}{\Hoend+2}
	\longrightarrow\Narat{\Hobeg}{\Hoend+1}
	\longrightarrow\Narat{\Hobeg}{\Hoend}.\end{eqnn}
This tower of dg manifolds was considered in \cite{DerQuot} Theorem 4.3.2. Our goal is to show that the limit of this sequence, computed in the category of stacks, is represented by a dg manifold.\end{remark}

\subsection{Taking an algebraic quotient}\label{Twotwo}
Also $\Varat{\Hoend}{\Homin}$ is a quasi-affine dg manifold and also here we separate the affine and the quasi-affine parts by considering the affine morphism
	\begin{eqnn}\label{AlgBundle}\Varat[0]{\Hoend}{\Homin}\longrightarrow\Emb[{[\Hobeg,\Hoend]}]\end{eqnn}
which is a trivial vector bundle with the fiber
	\begin{eqn}\Hhom\lp\Hosubi{\Hoend+1}{\Homin},\Homodi{\Hoend+1}{\Homin}\rp\oplus
	\Hhom\lp\Hori[+]\otimes\Hosubi{\Hobeg}{\Homin},\Hosubi{\Hobeg}{\Homin}\rp.\end{eqn}
This bundle and the rest of the dg structure sheaf of $\Varat{\Hoend}{\Homin}$ descend to a projective quotient of $\Emb[{[\Hobeg,\Hoend]}]$ with respect to the action of $\Groupi{\Hobeg}{\Hoend}$, giving us a dg manifold fibered over $\Grasi{\Hobeg}{\Hoend}$:
	\begin{eqnn}\label{TheAffine}\Warat{\Hoend}{\Homin}\longrightarrow\Grasi{\Hobeg}{\Hoend}.\end{eqnn}
Notice that $\Grasi{\Hobeg}{\Hoend}$ is fixed, i.e.\@ it is independent of $\Homin$. It is clear that (\ref{TheAffine}) is an affine morphism and it factors through $\Narat{\Hobeg}{\Hoend}$: 
	\begin{eqnn}\label{AlgBunPro}\xymatrix{\Warat{\Hoend}{\Homin}\ar[rr]\ar[dr] && \Narat{\Hobeg}{\Hoend}\ar[dl]\\
	& \Grasi{\Hobeg}{\Hoend} &} \end{eqnn}
The group $\Groupi{\Hobeg}{\Hoend}$ acts trivially on  (\ref{AlgBunPro}). On the other hand, $\Groupi{\Hoend+1}{\Homin}$ acts trivially on $\Narat{\Hobeg}{\Hoend}\rightarrow\Grasi{\Hobeg}{\Hoend}$ but not trivially on $\Warat{\Hoend}{\Homin}$. We would like to divide by this action of $\Groupi{\Hoend+1}{\Homin}$.

\smallskip

It is here that we take an algebraic quotient. Namely, using the fact that (\ref{TheAffine}) is affine and working locally on $\Grasi{\Hobeg}{\Hoend}$, we take the $\Groupi{\Hoend+1}{\Homin}$-invariant elements of the dg structure sheaf of $\Warat{\Hoend}{\Homin}$. We denote the result by
	\begin{eqnn}\label{AlgQuo}\xymatrix{\AlgQ{\Warat{\Hoend}{\Homin}}{\Groupi{\Hoend+1}{\Homin}}\ar[rr]\ar[dr] && \Narat{\Hobeg}{\Hoend}\ar[dl]\\
		& \Grasi{\Hobeg}{\Hoend} &}\end{eqnn}
Obvious functoriality of this construction gives us a sequence of affine morphisms of dg schemes:		
	\begin{eqnn}\label{AffineSeq}\ldots\longrightarrow\AlgQ{\Warat{\Hoend}{\Hoend+3}}{\Groupi{\Hoend+1}{\Hoend+3}}\longrightarrow
	\AlgQ{\Warat{\Hoend}{\Hoend+2}}{\Groupi{\Hoend+1}{\Hoend+2}}\longrightarrow\AlgQ{\Warat{\Hoend}{\Hoend+1}}{\Group{\Hoend+1}}.\qquad\end{eqnn}

Notice that $\lf \AlgQ{\Warat{\Hoend}{\Homin}}{\Groupi{\Hoend+1}{\Homin}}\rf_{\Homin>\Hoend}$ are not dg manifolds in general. Indeed, each $\Warat[0]{\Hoend}{\Homin}\rightarrow\Grasi{\Hobeg}{\Hoend}$ is affine, hence it is clear that 
	\begin{eqnn}\label{GoodQuo}\Warat[0]{\Hoend}{\Homin}\longrightarrow\AlgQ{\Warat[0]{\Hoend}{\Homin}}{\Groupi{\Hoend+1}{\Homin}}\end{eqnn} 
is a good quotient. To describe it we work locally on $\Narat{\Hobeg}{\Hoend}$ and assume that $\Warat[0]{\Hoend}{\Homin}\rightarrow\Narat{\Hobeg}{\Hoend}$ is a trivial bundle. Since $\Groupi{\Hoend+1}{\Homin}$ acts trivially on $\Narat{\Hobeg}{\Hoend}$, it is enough to compute the quotient of the fiber of this trivial bundle, which is 
	\begin{eqnn}\label{TheFiber}\Hhom\lp\Hosubi{\Hoend+1}{\Homin},\Homodi{\Hoend+1}{\Homin}\rp\oplus
		\Hhom\lp\Hori[+]\otimes\Hosubi{\Hobeg}{\Homin},\Hosubi{\Hoend+1}{\Homin}\rp.\end{eqnn} 
Consider the following morphism, given by taking all possible compositions,
	\begin{eqnn}\label{ProjectionMap} &&\Hhom\lp\Hosubi{\Hoend+1}{\Homin},\Homodi{\Hoend+1}{\Homin}\rp\oplus
		\Hhom\lp\Hori[+]\otimes\Hosubi{\Hobeg}{\Homin},\Hosubi{\Hoend+1}{\Homin}\rp\longrightarrow\\
		&& \longrightarrow \underset{\Hobeg\leq\Homini'\leq\Hoend}{\underset{\Hoend<\Homini\leq\Homin}{\underset{1\leq \Coin\leq\Homin-\Hoend}\bigoplus}}\;
		\underset{\Homini_j>0}{{\underset{\Homini_1+\ldots+\Homini_{\Coin}=\Homini-\Homini'}\bigoplus}}
		\Hhom[\mathbb C]\lp \Hori[\Homini_1]\otimes\ldots\otimes\Hori[\Homini_\Coin]\otimes\Hosub[\Homini'],\Homod[\Homini] \rp.\end{eqnn}
It is clear that (\ref{ProjectionMap}) is $\Groupi{\Hoend+1}{\Homin}$-invariant. The image of (\ref{ProjectionMap}) consists of points whose projection to each
	\begin{eqnn}\label{ManyIma}\Hhom[\mathbb C]\lp \Hori[\Homini_1]\otimes\ldots\otimes\Hori[\Homini_\Coin]\otimes\Hosub[\Homini'],\Homod[\Homini] \rp\end{eqnn}
is of rank $\leq\Hilpo(\Homini)$. According to the classical invariant theory (e.g.\@ \cite{Kraft} \S II.3.4, \S II.4.1) this image (a reduced scheme) is a good quotient of (\ref{TheFiber}) by $\Groupi{\Hoend+1}{\Homin}$. \hide{To check that the conditions of \cite{Kraft} \S II.3.4 are satisfied, we need to check that the proposed quotient is normal, the map is surjective, and each fiber has only one closed orbit. The first two conditions are immediate from loc.\@ cit.\@ \S II.4.1. The last condition is easily seen by taking step-by-step quotient.}

\smallskip

The fibers of (\ref{ProjectionMap}) may consist of more than one orbit. To have a geometric quotient we need to put conditions of maximality of rank on elements of (\ref{ManyIma}), e.g.\@ \cite{Kraft} \S II.4.1 p.\@ 121. Outside of the locus of a geometric quotient the conditions of Thm.\@ 4.2.15 in \cite{HuyLehn} p.\@ 98 are not satisfied for the structure sheaf of $\Warat{\Hoend}{\Homin}$, i.e.\@ it does not descend to $\AlgQ{\Warat[0]{\Hoend}{\Homin}}{\Groupi{\Hoend+1}{\Homin}}$. Therefore, by taking the $\Groupi{\Hoend+1}{\Homin}$-invariant elements of the dg structure sheaf of $\Warat{\Hoend}{\Homin}$ we obtain a dg scheme, but not necessarily a dg manifold.

\subsection{Taking a geometric quotient}\label{Twothree}
Here we would like to specify an open subscheme of $\Warat[0]{\Hoend}{\Homin}$, where the quotient (\ref{GoodQuo}) is geometric. Recall that $\Marat[0]{\Hobeg}{\Homin}\subseteq\Varat[0]{\Hoend}{\Homin}$ was defined by requiring injectivity of elements in $\Hhom\lp\Hosubi{\Hobeg}{\Homin},\Homodi{\Hobeg}{\Homin}\rp$. Let $\Xarat[0]{\Hobeg}{\Homin}\subseteq\Warat[0]{\Hoend}{\Homin}$ be the open subset given by requiring this injectivity and an additional condition: for any $\Homin\geq\Homini>\Homini'\geq\Hoend$ we take only those elements in $\Hhom[\mathbb C]\lp\Hori[\Homini-\Homini']\otimes\Hosub[\Homini'],\Hosub[\Homini]\rp$ that are surjective. 

Let $\Xarat{\Hobeg}{\Homin}\subseteq\Warat{\Hoend}{\Homin}$ be the open dg submanifold obtained by restricting the dg structure sheaf of $\Warat{\Hoend}{\Homin}$ to $\Xarat[0]{\Hobeg}{\Homin}$. Being given by putting conditions on ranks of maps, $\Xarat[0]{\Hobeg}{\Homin}$ is clearly a $\Groupi{\Hoend+1}{\Homin}$-invariant open part of $\Warat[0]{\Hoend}{\Homin}$. Moreover, this is the pre-image of the open subscheme 
	\begin{eqn}\GeoQ{\Xarat[0]{\Hobeg}{\Homin}}{\Groupi{\Hoend+1}{\Homin}}\subseteq\AlgQ{\Warat[0]{\Hoend}{\Homin}}{\Groupi{\Hoend+1}{\Homin}}\end{eqn} 
given by requiring that components in each (\ref{ManyIma}) are of highest possible rank, i.e.\@ $\Hilpo(\Homini)$.

As indicated by the notation, $\Xarat[0]{\Hobeg}{\Homin}\rightarrow\GeoQ{\Xarat[0]{\Hobeg}{\Homin}}{\Groupi{\Hoend+1}{\Homin}}$ is a geometric quotient (e.g.\@ \cite{Kraft} \S II.4.1). It can be described in two ways: \begin{itemize}
	\item taking (locally on $\Grasi{\Hobeg}{\Hoend}$) the subring of invariant functions on $\Xarat[0]{\Hobeg}{\Homin}$,
	\item dividing $\Marat[0]{\Hobeg}{\Homin}$ by $\Groupi{\Hobeg}{\Homin}$ and then taking the open part in $\Narat[0]{\Hobeg}{\Homin}$ given by requiring surjectivity of the maps above.\end{itemize}
In other words there is a canonical open inclusion $\GeoQ{\Xarat[0]{\Hobeg}{\Homin}}{\Groupi{\Hoend+1}{\Homin}}\hookrightarrow\Narat[0]{\Hobeg}{\Homin}$.

\begin{theorem}\label{ToPartial} For any $\Homin\geq\Hoend$ the open inclusion $\GeoQ{\Xarat{\Hobeg}{\Homin}}{\Groupi{\Hoend+1}{\Homin}}\hookrightarrow\Narat{\Hobeg}{\Homin}$ is a weak equivalence.\end{theorem}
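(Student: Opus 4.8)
I would follow the template of Proposition~\ref{EqBefore}. First I note that, by the discussion preceding the statement, the map in question is an open inclusion of dg manifolds: on the common locus the dg structure sheaf of $\GeoQ{\Xarat{\Hobeg}{\Homin}}{\Groupi{\Hoend+1}{\Homin}}$, obtained by descending that of $\Xarat{\Hobeg}{\Homin}\subseteq\Warat{\Hoend}{\Homin}$ along $\Groupi{\Hoend+1}{\Homin}$, coincides with the restriction of the dg structure sheaf of $\Narat{\Hobeg}{\Homin}$, since both are functorial descents of the dg structure sheaf of $\Marat{\Hobeg}{\Homin}$ -- the former splitting the group as $\Groupi{\Hobeg}{\Hoend}$ followed by $\Groupi{\Hoend+1}{\Homin}$, the latter taking all of $\Groupi{\Hobeg}{\Homin}$ at once. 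Granting this, it suffices to show that the induced morphism of classical subschemes is an isomorphism.

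Next I would observe that the classical morphism is again an open immersion: by the second of the two descriptions given before the theorem, $\GeoQ{\Xarat[0]{\Hobeg}{\Homin}}{\Groupi{\Hoend+1}{\Homin}}$ is precisely the open part of $\Narat[0]{\Hobeg}{\Homin}$ cut out by surjectivity of the multiplication maps in $\Hhom[\mathbb C]\lp\Hori[\Homini-\Homini']\otimes\Hosub[\Homini'],\Hosub[\Homini]\rp$ for $\Homin\geq\Homini>\Homini'\geq\Hoend$. Hence it is enough to prove the map is surjective on closed points, i.e.\@ that every closed point of the classical subscheme of $\Narat{\Hobeg}{\Homin}$ meets this surjectivity condition.

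So let $\ol\pt$ be such a closed point and lift it, along the quotient map $\Marat{\Hobeg}{\Homin}\rightarrow\Narat{\Hobeg}{\Homin}$ (a principal $\Groupi{\Hobeg}{\Homin}$-bundle, hence surjective on closed points), to a closed point $\pt\in\Harat{\Hobeg}{\Homin}$. As in the proof of Proposition~\ref{EqBefore}, $\pt$ is a Maurer--Cartan element $\MC$ cutting out an $\Hori[+]$-submodule $\Hosubi{\Hobeg}{\Homin}\subseteq\Homodi{\Hobeg}{\Homin}$ with $\dim_\mathbb C\Hosub[\Homini]=\Hilpo(\Homini)$, and since $\Homin\geq\Hoend$ we have $\Surj{\Homin}{\Hobeg}\lp\pt\rp\in\Sara{\Hobeg}$. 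Let $\Hogen[\geq\Hobeg]$ be the submodule generated by $\Hosub[\Hobeg]$; by Proposition~\ref{GoodGen} it satisfies $\dim_\mathbb C\Hogen[\Homini]=\Hilpo(\Homini)$, and as $\Hogen[\Homini]\subseteq\Hosub[\Homini]$ with equal dimensions the two coincide. Thus $\Hosubi{\Hobeg}{\Homin}$ is the truncation of the genuine, $\Hobeg$-regular submodule $\Hogen[\geq\Hobeg]$; being $\Hobeg$-regular it is $\Homini'$-regular for every $\Homini'\geq\Hobeg$, hence generated in degree $\Homini'$, so $\Hori[\Homini-\Homini']\otimes\Hosub[\Homini']\rightarrow\Hosub[\Homini]$ is surjective for all $\Homin\geq\Homini>\Homini'\geq\Hoend$. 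Therefore $\ol\pt$ lies in $\GeoQ{\Xarat[0]{\Hobeg}{\Homin}}{\Groupi{\Hoend+1}{\Homin}}$, giving surjectivity and the theorem.

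The closed-point count is essentially forced by Proposition~\ref{GoodGen} together with upward propagation of Castelnuovo--Mumford regularity, so I expect the genuine obstacle to lie in the first paragraph: verifying that the staged descent along $\Groupi{\Hobeg}{\Hoend}$ and then $\Groupi{\Hoend+1}{\Homin}$ returns the same dg structure sheaf as the one-step descent along $\Groupi{\Hobeg}{\Homin}$, so that the comparison is truly an open inclusion of dg manifolds rather than merely of classical schemes. Once this compatibility of descents is established, the weak-equivalence statement reduces cleanly to the surjectivity argued above.
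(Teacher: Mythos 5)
Your proof is correct and follows essentially the same route as the paper: reduce to checking closed points of the classical subschemes, lift such a point to $\Marat{\Hobeg}{\Homin}$, and use Proposition~\ref{GoodGen} (via the proof of Proposition~\ref{EqBefore}) together with $\Hobeg$-regularity of the resulting submodule to deduce surjectivity of the multiplication maps $\Hori[\Homini-\Homini']\otimes\Hosub[\Homini']\rightarrow\Hosub[\Homini]$. The descent compatibility you flag as the main obstacle is not part of the paper's proof either: it is exactly the content of the two descriptions of the geometric quotient given in the discussion immediately preceding the theorem, so the paper treats it as established by the construction rather than as a step in the argument.
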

\begin{proof} From the proof of Proposition \ref{EqBefore} we know that any classical point in $\Marat{\Hobeg}{\Homin}$ defines a sub-module of $\Homodi{\Hobeg}{\Homin}$ that generates a sub-module of $\Homod[\geq\Hobeg]$ with $\Hilpo(\Hilva)$ as its Hilbert function. This and the fact that all such sub-modules are $\Hobeg$-regular imply that every $\Hori[\Homini-\Homini']\otimes\Hosub[\Homini']\rightarrow\Hosub[\Homini]$ is surjective for this sub-module, i.e.\@ the classical point has to lie in $\GeoQ{\Xarat{\Hobeg}{\Homin}}{\Groupi{\Hoend+1}{\Homin}}$.\end{proof}

\begin{theorem} For each $\Homin\geq\Hoend$ the open inclusion $\GeoQ{\Xarat{\Hobeg}{\Homin}}{\Groupi{\Hoend+1}{\Homin}}\subseteq\AlgQ{\Warat{\Hoend}{\Homin}}{\Groupi{\Hoend+1}{\Homin}}$ is a weak equivalence.\end{theorem}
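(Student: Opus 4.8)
The plan is to show that the open inclusion $\GeoQ{\Xarat{\Hobeg}{\Homin}}{\Groupi{\Hoend+1}{\Homin}}\subseteq\AlgQ{\Warat{\Hoend}{\Homin}}{\Groupi{\Hoend+1}{\Homin}}$ of dg schemes is a weak equivalence. Since by construction it is an open inclusion, the underlying dg structure sheaves agree on the overlap, so it suffices to show that the two classical schemes coincide, and for this it is enough (both being fibered affinely over $\Grasi{\Hobeg}{\Hoend}$) to prove that the inclusion of classical schemes is a bijection on closed points. Thus the whole statement reduces to showing that every closed point of $\AlgQ{\Warat[0]{\Hoend}{\Homin}}{\Groupi{\Hoend+1}{\Homin}}$ already lies in the open locus $\GeoQ{\Xarat[0]{\Hobeg}{\Homin}}{\Groupi{\Hoend+1}{\Homin}}$, i.e.\@ that every closed point has all of its components in the spaces (\ref{ManyIma}) of maximal rank $\Hilpo(\Homini)$.

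First I would recall what a closed point of $\AlgQ{\Warat[0]{\Hoend}{\Homin}}{\Groupi{\Hoend+1}{\Homin}}$ encodes. By the analysis of Section \ref{Twotwo}, such a point corresponds to a closed orbit in the fiber (\ref{TheFiber}), equivalently to a point in the image of the composition map (\ref{ProjectionMap}); moreover, the dg structure sheaf is nontrivial only on the locus where the classical point lies in $\Sara{\Hobeg}$ (since $\Warat{\Hoend}{\Homin}$ is pulled back from $\Marat{\Hobeg}{\Hoend}$ and its dg structure is governed by the Maurer--Cartan equations of $\Liat{\Hobeg}{\Homin}{\bullet}$). Therefore any closed point of the \emph{dg} scheme $\AlgQ{\Warat{\Hoend}{\Homin}}{\Groupi{\Hoend+1}{\Homin}}$ sits over a point of $\Sara{\Hobeg}\subseteq\Mara{\Hobeg}$ and comes equipped with a Maurer--Cartan element $\MC\in\Liat{\Hobeg}{\Homin}{\bullet}$, hence with an $\Hori[+]$-module structure on $\Hosubi{\Hobeg}{\Homin}$ together with a morphism $\Hosubi{\Hobeg}{\Homin}\rightarrow\Homodi{\Hobeg}{\Homin}$.

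Now I would invoke the same mechanism as in Proposition \ref{EqBefore} and Theorem \ref{ToPartial}. Because the base point lies in $\Sara{\Hobeg}$, Proposition \ref{GoodGen} applies: the submodule $\Hogen[\geq\Hobeg]$ generated by $\Hosub[\Hobeg]$ satisfies $\dim_\mathbb C\Hogen[\Homini]=\Hilpo(\Homini)$ for all $\Homini\geq\Hobeg$, and $\Hobeg$-regularity forces this generated submodule to be $\Hobeg$-generated. The crucial point is that the multiplication maps $\Hori[\Homini-\Homini']\otimes\Hosub[\Homini']\rightarrow\Hosub[\Homini]$ coming from the Maurer--Cartan element must be surjective (their image contains $\Hogen[\Homini]$, which already has the full dimension $\Hilpo(\Homini)$), and the composite maps into $\Homod[\Homini]$ must have rank exactly $\Hilpo(\Homini)$, since their image contains $\Hogen[\Homini]$. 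This is precisely the maximal-rank condition cutting out (\ref{ManyIma}), so the point lies in $\GeoQ{\Xarat[0]{\Hobeg}{\Homin}}{\Groupi{\Hoend+1}{\Homin}}$. Hence the inclusion of classical schemes is surjective on closed points, and being an open inclusion it is an isomorphism; since the dg structure sheaves agree on the common open, the map is a weak equivalence.

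The main obstacle, and the point deserving the most care, is the reduction from the dg scheme to a statement purely about Maurer--Cartan data: one must argue that nontriviality of the dg structure over a closed point of the \emph{algebraic} quotient genuinely produces the module structure and the morphism $\Hosubi{\Hobeg}{\Homin}\rightarrow\Homodi{\Hobeg}{\Homin}$ needed to apply Proposition \ref{GoodGen}, despite the fact that the algebraic quotient is only a good (not geometric) quotient and its fibers may a priori contain several orbits. The resolution is that the dg structure sheaf is nonzero only along the closed orbit that satisfies the Maurer--Cartan equations, and it is exactly that orbit which, by the regularity argument, already meets the geometric-quotient locus; the extra non-geometric points of the algebraic quotient carry no dg information and do not survive as honest closed points once the base is constrained to $\Sara{\Hobeg}$.
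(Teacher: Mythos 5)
Your overall frame (reduce to classical points; use that the base point lies in $\Sara{\Hobeg}$; apply Proposition \ref{GoodGen}; get injectivity by a dimension count and surjectivity by the regularity argument of Theorem \ref{ToPartial}) agrees with the paper's endgame, but there is a genuine gap at the pivotal step: you assert that a closed point of the classical locus of $\AlgQ{\Warat{\Hoend}{\Homin}}{\Groupi{\Hoend+1}{\Homin}}$ ``comes equipped with a Maurer--Cartan element $\MC\in\Liat{\Hobeg}{\Homin}{\bullet}$''. That is precisely what cannot be assumed, and granting it trivializes the theorem (an MC element is a classical point of $\Varat{\Hoend}{\Homin}$, and Proposition \ref{EqBefore} then does the rest). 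The dg structure sheaf of the algebraic quotient consists only of the $\Groupi{\Hoend+1}{\Homin}$-invariant elements, so its classical locus is cut out by $\homdi$ of the \emph{invariant} degree $-1$ elements. This locus contains the good quotient of the upstairs Maurer--Cartan locus, but nothing formal forces equality: invariant elements of the ideal $\homdi\lp\mathcal O^{-1}\rp\cdot\mathcal O^{0}$ need not be expressible through differentials of invariant degree $-1$ elements (invariant tensors are not sums of products of invariants -- think of the identity element of $V\otimes\duap{V}$, which is invariant even when $V$ has no nonzero invariants), and a fiber of the good quotient (\ref{GoodQuo}) over such a point may contain several orbits, none of which is required to satisfy the full MC equation. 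Your closing paragraph flags exactly this obstacle, but the proposed resolution (``the dg structure sheaf is nonzero only along the closed orbit that satisfies the Maurer--Cartan equations'') is a restatement of the desired conclusion, not an argument.

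The paper closes this gap with a device your proposal is missing: it exhibits one specific piece of the differential that is manifestly invariant and therefore survives to the quotient. Namely, the trivial bundle on $\Warat[0]{\Hoend}{\Homin}$ with fiber $\Hhom\lp\Hori[+]\otimes\Hosubi{\Hobeg}{\Hoend},\Homodi{\Hoend+1}{\Homin}\rp$ carries the \emph{trivial} $\Groupi{\Hoend+1}{\Homin}$-action, so it descends to $\AlgQ{\Warat{\Hoend}{\Homin}}{\Groupi{\Hoend+1}{\Homin}}$ together with its canonical section, which is the component of the dg differential expressing compatibility of the morphism to $\Homodi{\Hobeg}{\Homin}$ with multiplication by $\Hori[+]$. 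Being the differential of invariant degree $-1$ elements, this section must vanish at every classical point of the quotient. Pulling back to an \emph{arbitrary} lift in $\Warat[0]{\Hoend}{\Homin}$, the vanishing says that the composite $\Hori[+]\otimes\Hosubi{\Hobeg}{\Hoend}\rightarrow\Hosubi{\Hoend+1}{\Homin}\rightarrow\Homodi{\Hoend+1}{\Homin}$ coincides with multiplication inside $\Homod[\geq\Hobeg]$, so the image of $\Hosubi{\Hoend+1}{\Homin}\rightarrow\Homodi{\Hoend+1}{\Homin}$ contains the generated submodule $\Hogen[\geq\Hobeg]$ in each degree; only now do Proposition \ref{GoodGen} and the dimension count apply, giving injectivity, and the argument of Theorem \ref{ToPartial} gives the rank conditions in (\ref{ManyIma}). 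Since this runs for every lift, the whole fiber lies in $\Xarat[0]{\Hobeg}{\Homin}$ and the quotient point lies in $\GeoQ{\Xarat{\Hobeg}{\Homin}}{\Groupi{\Hoend+1}{\Homin}}$. The lesson is that ``the point carries MC data'' must be replaced by ``an explicitly invariant consequence of the MC equation vanishes at the point'', and constructing that invariant consequence is the actual content of the proof.
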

\begin{proof} As we have noticed above the structure sheaf of $\Warat{\Hoend}{\Homin}$ does not descend to the quotient by ${\Groupi{\Hoend+1}{\Homin}}$. However, there are parts of this structure sheaf that do descend. Consider the trivial bundle on $\Warat[0]{\Hoend}{\Homin}$ having $\Hhom\lp\Hori[+]\otimes \Hosubi{\Hobeg}{\Hoend},\Homodi{\Hoend+1}{\Homin}\rp$ as the fiber. It carries a trivial action of ${\Groupi{\Hoend+1}{\Homin}}$. Therefore it does descend to the quotient (e.g.\@ Thm.\@ 4.2.15 in \cite{HuyLehn} p.\@ 98). 

The descended bundle on the quotient comes with a section, that has to vanish at classical points in $\AlgQ{\Warat{\Hoend}{\Homin}}{\Groupi{\Hoend+1}{\Homin}}$.\footnote{This section comes from the invariant section of the original bundle on $\Warat[0]{\Hoend}{\Homin}$, that is part of the structure of a dg manifold.} Vanishing of this section immediately implies that the corresponding element in $\Hhom\lp\Hosubi{\Hoend+1}{\Homin},\Homodi{\Hoend+1}{\Homin}\rp$ is injective. Then, as in the proof of Theorem \ref{ToPartial}, we see that the classical point has to lie in $\GeoQ{\Xarat{\Hobeg}{\Homin}}{\Groupi{\Hoend+1}{\Homin}}$, i.e.\@ $\GeoQ{\Xarat{\Hobeg}{\Homin}}{\Groupi{\Hoend+1}{\Homin}}$ and $\AlgQ{\Warat{\Hoend}{\Homin}}{\Groupi{\Hoend+1}{\Homin}}$ have the same classical points. Since $\GeoQ{\Xarat{\Hobeg}{\Homin}}{\Groupi{\Hoend+1}{\Homin}}\subseteq\AlgQ{\Warat{\Hoend}{\Homin}}{\Groupi{\Hoend+1}{\Homin}}$ is an open inclusion, we conclude that this is a weak equivalence.\end{proof}

\medskip

The two theorems above give us an infinite commutative diagram, whose vertical arrows are weak equivalences:

	\begin{eqn}\xymatrix{\ldots\ar[r] & \AlgQ{\Warat{\Hoend}{\Hoend+3}}{\Groupi{\Hoend+1}{\Hoend+3}}\ar[r] &
	\AlgQ{\Warat{\Hoend}{\Hoend+2}}{\Groupi{\Hoend+1}{\Hoend+2}}\ar[r] & \AlgQ{\Warat{\Hoend}{\Hoend+1}}{\Group{\Hoend+1}}\\
	\ldots\ar[r] & \GeoQ{\Xarat{\Hobeg}{\Hoend+3}}{\Groupi{\Hoend+1}{\Hoend+3}}\ar[r]\ar[d]\ar[u] &
		\GeoQ{\Xarat{\Hobeg}{\Hoend+2}}{\Groupi{\Hoend+1}{\Hoend+2}}\ar[r]\ar[d]\ar[u] & \GeoQ{\Xarat{\Hobeg}{\Hoend+1}}{\Group{\Hoend+1}}\ar[d]\ar[u]\\
	\ldots\ar[r] & \Narat{\Hobeg}{\Hoend+3}\ar[r] & \Narat{\Hobeg}{\Hoend+2}\ar[r] & \Narat{\Hobeg}{\Hoend+1}
	}\end{eqn}
Homotopy limit of the bottom row is the stack we would like to represent. It is weakly equivalent to a homotopy limit of the top row, which is a sequence of \e{affine} morphisms between dg schemes. A homotopy limit of such sequence can be computed within the category of dg manifolds. This (standard) construction is subject of the following section.

\section{Representation by a dg manifold}\label{Three}
In the previous section we have constructed a diagram
	\begin{eqnn}\label{ToCompute}\ldots\longrightarrow\AlgQ{\Warat{\Hoend}{\Hoend+2}}{\Groupi{\Hoend+1}{\Hoend+2}}\longrightarrow
	\AlgQ{\Warat{\Hoend}{\Hoend+1}}{\Group{\Hoend+1}}\end{eqnn}
of dg schemes, whose limit, computed in the category of stacks, is the derived $Quot$-scheme. In this section we would like to describe this limit using dg manifolds. To this end we recall that each $\AlgQ{\Warat{\Hoend}{\Homin}}{\Groupi{\Hoend+1}{\Homin}}$ comes with an affine morphism to $\Grasi{\Hobeg}{\Hoend}$. Therefore, instead of the limit of (\ref{ToCompute}) we can compute a colimit of the diagram
	\begin{eqnn}\label{AsSheaves}\ldots\longleftarrow\Shig{\Hoend+2}\longleftarrow\Shig{\Hoend+1}\end{eqnn}
of sheaves of differential $\nopos$-graded $\Shig[]{\Grasi{\Hobeg}{\Hoend}}$-algebras. To ensure that the colimit is homotopically correct, we need to take a resolution of (\ref{AsSheaves}).

\begin{proposition} The diagram (\ref{AsSheaves}) of differential $\nopos$-graded $\Shig[]{\Grasi{\Hobeg}{\Hoend}}$-algebras is weakly equivalent to a diagram
	\begin{eqnn}\label{GoodSheaves}\ldots\longleftarrow\Hhig{\Hoend+2}\longleftarrow\Hhig{\Hoend+1},\end{eqnn}
where each $\Hhig{\Homin}$ is a sheaf of almost free differential $\nopos$-graded $\Shig[]{\Grasi{\Hobeg}{\Hoend}}$-algebras, generated by sequences of locally free quasi-coherent sheaves on $\Grasi{\Hobeg}{\Hoend}$. Each morphism in (\ref{GoodSheaves}) is almost free.\end{proposition}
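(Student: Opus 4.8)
The plan is to exhibit (\ref{GoodSheaves}) as a Reedy-cofibrant replacement of (\ref{AsSheaves}) in the category of sheaves of $\nopos$-graded differential commutative $\Shig[]{\Grasi{\Hobeg}{\Hoend}}$-algebras. Recall that (\ref{AsSheaves}) is a direct system indexed by the poset $\lf\Hoend+1\leq\Hoend+2\leq\cdots\rf$: the arrows drawn in the display are the structure maps of the affine morphisms, so on algebras they read $\Shig{\Hoend+1}\to\Shig{\Hoend+2}\to\cdots$. I would equip the ambient category with the standard model structure in which weak equivalences are quasi-isomorphisms, fibrations are the degreewise surjections, and cofibrations are retracts of almost free extensions; in this structure an object is cofibrant precisely when it is almost free over $\Shig[]{\Grasi{\Hobeg}{\Hoend}}$, and \emph{almost free morphism} is synonymous with \emph{cofibration}. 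A weakly equivalent diagram of shape (\ref{GoodSheaves}) whose objects are almost free and whose transition maps are almost free is then exactly a Reedy-cofibrant replacement over the direct category $\mathbb{N}$, and this is what I build.

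The construction is inductive along the tower. First I would choose an almost free resolution $\Hhig{\Hoend+1}\to\Shig{\Hoend+1}$, i.e.\@ a cofibrant replacement of the initial term. Assuming $\Hhig{\Homin}\to\Shig{\Homin}$ has been built for some $\Homin>\Hoend$, I factor the composite $\Hhig{\Homin}\to\Shig{\Homin}\to\Shig{\Homin+1}$ as an almost free extension followed by a surjective quasi-isomorphism,
	\begin{eqn}\Hhig{\Homin}\hooklongrightarrow\Hhig{\Homin+1}\longrightarrow\Shig{\Homin+1},\end{eqn}
so that $\Hhig{\Homin}\to\Hhig{\Homin+1}$ is almost free by construction and $\Hhig{\Homin+1}\to\Shig{\Homin+1}$ is a weak equivalence. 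Since each $\Hhig{\Homin+1}$ is an almost free extension of the almost free algebra $\Hhig{\Homin}$, it is almost free over $\Shig[]{\Grasi{\Hobeg}{\Hoend}}$, and the resulting vertical maps assemble into the required weak equivalence of diagrams.

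The factorization itself is produced by the usual step-by-step adjunction of generators in the negative homological degrees, worked one degree at a time: surject onto the relevant cohomology in degree $0$, then adjoin generators in degree $-1$ to kill the kernel of the induced map, and continue downward, exactly as in the construction of a semi-free resolution. The one point requiring care — and the main obstacle — is the demand that the generating sheaves be locally free rather than merely quasi-coherent. At each stage the module of cycles or relations to be hit is a quasi-coherent sheaf on $\Grasi{\Hobeg}{\Hoend}$; because the base is a smooth projective, hence quasi-projective, variety, every quasi-coherent sheaf is a quotient of a locally free quasi-coherent sheaf (a possibly infinite direct sum of twists $\mathcal O(-n)$), so I can choose the generating sheaf in each degree to be locally free. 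Adjoining the free graded-commutative algebra on this locally free sheaf, placed in the appropriate negative degree, keeps each $\Hhig{\Homin}$ almost free; since the accumulated generators in a fixed negative degree form a direct sum of locally free sheaves, hence again locally free, each $\Hhig{\Homin}$ is generated by locally free quasi-coherent sheaves, one in each negative degree.

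Finally I would record that the naive colimit of (\ref{GoodSheaves}) represents the homotopy colimit of (\ref{AsSheaves}): the colimit of a Reedy-cofibrant diagram over the direct category $\mathbb{N}$ is weakly equivalent to its homotopy colimit, which is precisely the reason this resolution is the object needed for the computation in the next step. The remaining subtleties to check are that the factorizations are compatible along the whole tower — handled by the inductive scheme above, which never modifies previously constructed terms — and that the locally free choices do not disturb the quasi-isomorphism property, which they do not, since surjecting from a locally free sheaf onto the target module is all that the resolution argument uses.
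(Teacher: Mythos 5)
Your proposal is correct and follows essentially the same route as the paper: both arguments combine the standard degree-by-degree construction of almost free (semi-free) resolutions with the fact that, on the projective base $\Grasi{\Hobeg}{\Hoend}$, every quasi-coherent sheaf is a quotient of a (possibly infinite) sum of line bundles, which supplies the locally free generators. The only difference is one of packaging: where the paper invokes functoriality of the standard resolution to resolve the whole tower at once, you run the construction inductively as a Reedy cofibrant replacement over $\mathbb{N}$, which has the merit of making explicit why the transition maps $\Hhig{\Homin}\to\Hhig{\Homin+1}$ are almost free.
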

\begin{proof} As $\Grasi{\Hobeg}{\Hoend}$ is a projective scheme, every quasi-coherent sheaf is a quotient of a sum of line bundles. Then the standard construction of an almost free resolution of dg algebras, which is functorial, produces (\ref{GoodSheaves}) from (\ref{AsSheaves}), where each $\Hhig[*]{\Homin}$ is freely generated (as a commutative algebra) by a sequence of (infinite) sums of line bundles, one in each degree.\end{proof}

Let $\Hhig{\infty}$ be the categorical colimit of (\ref{GoodSheaves}). It is clear that, locally on $\Grasi{\Hobeg}{\Hoend}$, this colimit is also the homotopy colimit, giving us a Zariski atlas of a dg manifold of infinite type, that represents the homotopy limit of (\ref{UGSequence}), computed in the category of stacks.

\begin{theorem} There is a weak equivalence $\Bhig{\infty}\overset{\simeq}\longrightarrow\Hhig{\infty}$, where $\Bhig{\infty}$ is a sheaf of almost free differential $\nopos$-graded $\Shig[]{\Grasi{\Hobeg}{\Hoend}}$-algebras, generated by a sequence of locally free coherent sheaves.\end{theorem}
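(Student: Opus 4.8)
The plan is to realize $\Bhig{\infty}$ as a finite--type model for $\Hhig{\infty}$ over $\Shig[]{\Grasi{\Hobeg}{\Hoend}}$, built one cohomological degree at a time and controlled by the cotangent complex. The essential input, already isolated in the introduction, is that the tangent complex of the homotopy limit has \emph{coherent} cohomology: its cohomology sheaves are the $Ext$--sheaves computed in \cite{DerQuot}, which are finite dimensional at every closed point of $\Grasi{\Hobeg}{\Hoend}$ and vanish outside a bounded range of degrees because $\Vari$ is finite dimensional. Since $\Hhig{\infty}$ is almost free, its relative cotangent complex over $\Shig[]{\Grasi{\Hobeg}{\Hoend}}$ is represented by the $\nopos$--graded complex of its (locally free, quasi--coherent) generators with linearized differential; the previous sentence therefore says precisely that this complex is \emph{perfect} -- a bounded complex of locally free sheaves with coherent cohomology. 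This perfectness is what makes a finite--type model possible.

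I would then construct $\Bhig{\infty}$ by descending induction on the cohomological degree, at each stage choosing coherent locally free generators on the projective base $\Grasi{\Hobeg}{\Hoend}$. In degree $0$ the classical truncation of $\Hhig{\infty}$ is the classical $Quot$--scheme, which is affine and of finite type over $\Grasi{\Hobeg}{\Hoend}$ (the quotients of Section \ref{Twotwo} being affine over it via $\Narat{\Hobeg}{\Hoend}\rightarrow\Grasi{\Hobeg}{\Hoend}$), hence closed in the total space of a finite rank locally free sheaf $\ol V_0$; this $\ol V_0$ furnishes the degree $0$ generators and a map $\mathrm{Sym}(\ol V_0)\rightarrow\Hhig[0]{\infty}$ with smooth ambient. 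Passing to degree $-n$, coherence of the $n$--th cohomology of the cotangent complex together with Serre's theorem on $\Grasi{\Hobeg}{\Hoend}$ let me pick a \emph{finite} sum of line bundle twists $\ol V_{-n}$ and a map $\ol V_{-n}\rightarrow\Hhig[-n]{\infty}$ surjecting onto the needed cocycles; extending the algebra map and the differential, and then adjoining further coherent generators in degree $-n-1$ to kill the kernel, makes the induced map on the $n$--th cohomology of cotangent complexes an isomorphism. Perfectness guarantees that only coherent sheaves are required at each step and that the process halts after finitely many degrees, producing an almost free dg $\Shig[]{\Grasi{\Hobeg}{\Hoend}}$--algebra $\Bhig{\infty}$ with coherent locally free generators together with a morphism $\Bhig{\infty}\rightarrow\Hhig{\infty}$.

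Finally I would check that $\Bhig{\infty}\rightarrow\Hhig{\infty}$ is a weak equivalence, i.e.\@ induces isomorphisms on all cohomology sheaves $\mathcal H^{-n}$. This follows from the construction: the map is an isomorphism on classical truncations by the degree $0$ step and a quasi--isomorphism on cotangent complexes by the degree--wise cohomology matching, and for almost free dg manifolds these two conditions imply weak equivalence. The main obstacle is not the formal induction but the finiteness it rests on, namely that the tangent complex of $\Hhig{\infty}$ is genuinely perfect -- coherent cohomology in each degree \emph{and} bounded amplitude -- so that finitely many coherent generators suffice in each degree and the construction terminates; coherence is the $Ext$--computation recalled above, and boundedness comes from the vanishing of the relevant $Ext$--groups outside a finite range of degrees. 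A secondary technical point, handled by the standard stage--by--stage lifting argument, is that the coherent choices $\ol V_{-n}$ can be made compatibly with the differential so that the resulting maps are honest morphisms of dg algebras rather than maps only up to homotopy, the correction terms being absorbed into the generators of the next degree.
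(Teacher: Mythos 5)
There is a genuine gap, and it sits exactly where the paper has to work hardest. Your degree-by-degree construction of $\Bhig{\infty}$ requires, at each stage $-n$, a \emph{coherent} locally free sheaf ${\ol V}_{-n}$ whose cocycles surject onto the degree $-n$ cohomology of the structure sheaf of $\Hhig{\infty}$; this is possible only if the cohomology sheaf $\Hosh[-n]\lp\Hhig{\infty}\rp$ is itself coherent, since a finite sum of line-bundle twists can never surject onto a non-coherent quasi-coherent sheaf. You never establish this coherence. What you assert instead is coherence (and boundedness) of the cohomology of the \emph{cotangent complex}, and even that is under-justified: the $Ext$-computation in \cite{DerQuot} concerns the tangent complex at individual closed points, and pointwise finite-dimensionality of fibers does not yield coherence of cohomology sheaves. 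Passing from control of the cotangent complex at classical points to control of the structure sheaf is precisely the content of the Lemma inside the paper's proof: using the stabilization result of \cite{DerQuot} \S 4.3, one shows that for every $\Coin$ there is $\Homi{\Coin}$ such that for all $\Homin\geq\Homi{\Coin}$ the maps $\Narat{\Hobeg}{\Homin+1}\longrightarrow\Narat{\Hobeg}{\Homin}$ induce isomorphisms on structure-sheaf cohomology in degrees $\geq\Coin$ (the argument works locally at each classical point and divides out an acyclic dg ideal), whence $\Hosh[\Coin]\lp\Hhig{\infty}\rp$ agrees with the corresponding cohomology sheaf of a finite-stage dg manifold of finite type and is therefore coherent. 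Without this step, or an explicit appeal to a theorem of the type ``finitely presented $\mathcal H^0$ plus almost perfect cotangent complex implies coherent higher cohomology,'' your induction cannot start.

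A second, smaller problem is your reliance on \emph{boundedness}: you use ``perfectness'' to claim the construction halts after finitely many degrees. This is both unnecessary and unproven. The theorem (and the paper's definition of dg manifold of finite type) only asks for a sequence of locally free coherent generators, one in each negative degree, with no bound on the number of degrees; the paper's construction indeed produces generators in every negative degree. Even granting a perfect cotangent complex, the assertion that an almost free model can be chosen globally with generators in only finitely many degrees is a further nontrivial claim that you do not justify, and dropping it costs you nothing. Finally, note that the paper sidesteps your last step entirely: since $\Bhig{\infty}$ is built as a subsheaf of $\Hhig{\infty}$ whose cocycles surject onto all cohomology sheaves, the weak equivalence holds by construction, whereas your plan needs an additional Whitehead-type statement (isomorphism on classical truncations plus quasi-isomorphism on cotangent complexes implies weak equivalence) which in this setting would itself require proof.
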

\begin{proof} From Proposition \ref{Comparison} and Thm.\@ 1.4.1 in \cite{DerQuot} we know that $\forall\Homin\geq\Hoend$ the classical part of $\AlgQ{\Warat{\Hoend}{\Homin}}{\Groupi{\Hoend+1}{\Homin}}$ is the classical $Quot$-scheme. In other words, the projection
	\begin{eqn}\AlgQ{\Warat{\Hoend}{\Homin+1}}{\Groupi{\Hoend+1}{\Homin+1}}\longrightarrow\AlgQ{\Warat{\Hoend}{\Homin}}{\Groupi{\Hoend+1}{\Homin}}\end{eqn}
induces an isomorphism on cohomology in degree $0$.

\begin{lemma} For any $\Coin\leq -1$ there is $\Homi{\Coin}\geq\Hoend$ such that $\forall\Homin\geq\Homi{\Coin}$
	\begin{eqnn}\label{IsoTop}\Narat{\Hobeg}{\Homin+1}\longrightarrow\Narat{\Hobeg}{\Homin}\end{eqnn}
induces isomorphisms in degrees $\geq\Coin$ of cohomologies of the structure sheaves.\end{lemma}
\begin{proof} Cohomologies of the structure sheaves of $\Narat{\Hobeg}{\Homin+1}$, $\Narat{\Hobeg}{\Homin}$ are coherent sheaves on the (common) scheme of classical points. Therefore, to prove that (\ref{IsoTop}) induces  isomorphisms on cohomologies in degrees $\geq\Coin$, it is enough work locally at each classical point.

According to \cite{DerQuot} \S4.3 for any $\Coin\leq 0$ there is $\Homi{\Coin}\geq 0$ such that $\forall\Homin\geq\Homi{\Coin}$ and for any classical point $\pt\colon\spec{\mathbb C}\rightarrow\Narat{\Hobeg}{\Homin+1}$ the map (\ref{IsoTop}) induces isomorphisms in degrees $\geq\Coin$ of cohomologies of the cotangent complexes
	\begin{eqn}\Hogy[*]\lp\Cotap{\Narat{\Hobeg}{\Homin}}{\pt}\rp\longrightarrow
	\Hogy[*]\lp\Cotap{\Narat{\Hobeg}{\Homin+1}}{\pt}\rp.\end{eqn}
\hide{Moreover, at each classical point cohomologies in degrees $\leq\Coin$ of the cotangent complex are isomorphic to the value of the $Ext$-functor. Therefore, since the classical $Quot$-scheme is Noetherian, choosing $\Coin$ large enough we obtain $\Homi{\Coin}$ such that for any $\Homin\geq\Homi{\Coin}$, $\Coin\leq\Doin\leq\Coin+2$
	\begin{eqn}\Hogy[\Doin]\lp\Cotap{\AlgQ{\Warat{\Hoend}{\Homin}}{\Groupi{\Hoend+1}{\Homin}}}{\pt}\rp\cong
	\Hogy[\Doin]\lp\Cotap{\AlgQ{\Warat{\Hoend}{\Homin+1}}{\Groupi{\Hoend+1}{\Homin+1}}}{\pt}\rp\cong\lf 0\rf.\end{eqn}}
This allows us, in a neighbourhood of each $\pt$, to find an acyclic dg ideal in the structure sheaf of $\Narat{\Hobeg}{\Homin+1}$, such that, after dividing by this ideal, (\ref{IsoTop}) becomes an isomorphism in degrees $\geq\Coin$. This isomorphism induces then an isomorphism on cohomology in degrees $\geq \Coin+1$.\end{proof}

The Lemma above immediately implies that $\forall \Coin<0$ the sheaf of cohomologies $\Hosh[\Coin]\lp\Hhig{\infty}\rp$ is a coherent sheaf on the classical $Quot$-scheme. This allows us to construct $\Bhig{\infty}$ as a subsheaf of $\Hhig{\infty}$. Indeed, we start with $\Bhig[0]{\infty}:=\Shig[]{\Grasi{\Hobeg}{\Hoend}}$. Since the ideal sheaf of the classical $Quot$-scheme in $\Grasi{\Hobeg}{\Hoend}$ is coherent, we can find a locally free coherent subsheaf of $\Hhig[-1]{\infty}$, whose differential belongs to $\Shig[]{\Grasi{\Hobeg}{\Hoend}}$ and equals the ideal sheaf of the classical $Quot$-scheme. Using coherence of the sheaf of cohomologies $\Hosh[-1]\lp\Hhig{\infty}\rp$ we can extend this subsheaf of $\Hhig[-1]{\infty}$ to a locally free coherent $\Bhig[-1]{\infty}\subseteq\Hhig[-1]{\infty}$ such that the cocycles in it project surjectively onto $\Hosh[-1]\lp\Hhig{\infty}\rp$. Proceeding in this way we obtain the required $\Bhig{\infty}\hookrightarrow\Hhig{\infty}$.\end{proof}

Altogether we obtain a dg manifold of finite type $\lp\Grasi{\Hobeg}{\Hoend},\Bhig{\infty}\rp$, that represents a homotopy limit of (\ref{ToCompute}). This is the dg $Quot$-manifold we wanted.

\noindent{\small{\tt{dennis.borisov@uwindsor.ca, lkatzarkov@gmail.com, artan@mit.edu}}

\end{document}